\documentclass{amsart}
\usepackage{amssymb,amsmath,amsthm}
\title{Predicative toposes}
\author{Benno van den Berg}
\usepackage{ast2}
\usepackage[margin=1.4in]{geometry}
\usepackage{proof}

\def\epi{\ensuremath{\twoheadrightarrow}}

\begin{document}

\maketitle

\begin{abstract}
We explain the motivation for looking for a predicative analogue of the notion of a topos and propose two definitions. For both notions of a predicative topos we will present the basic results, providing the groundwork for future work in this area.
\end{abstract}

\section{Introduction}

Topos theory is a highly successful chapter in the categorical analysis of constructive logic. Originally toposes were invented by Grothendieck with the aim of proving the Weil conjectures in algebraic geometry. Later it became clear that these Grothendieck toposes have a rich internal logic, which led Lawvere and Tierney to formulate the notion of an ``elementary topos''. For quite some time, the only examples of such elementary toposes that people knew about were Grothendieck toposes and the free topos. This changed when Martin Hyland discovered the effective topos at the beginning of the eighties. More recently, people have discovered Dialectica toposes whose internal logic is related to various functional interpretations. As a result, we have a common framework in which to study topological, sheaf and Kripke models (because these are special cases of Grothendieck toposes) as well as realizability and functional interpretations.

But the power and expressiveness of the internal logic of a topos also creates a certain \emph{embarras de richesse}. Indeed, it is far stronger than what most constructive mathematicians are willing to use in their work. Nowadays, most constructivists point to systems like Martin-L\"of's Type Theory (MLTT) \cite{martinlof84} or Peter Aczel's constructive set theory {\bf CZF} \cite{aczelrathjen01} as providing the kind of system in which they want their work to be formalisable. These systems are much weaker, however, than the internal language of a topos, which is a form of full higher-order arithmetic (if we use topos to mean elementary topos with natural numbers object, as we will do in this paper).

The key difference is that MLTT and {\bf CZF} are systems which are (generalised) predicative; this means that, although they may accept a wide variety of inductively defined sets, they do not include the powerset axiom. In contrast, the internal language of a topos is impredicative, due to the presence of a subobject classifier and power objects more generally.

This has led people to wonder whether there could be a kind of ``predicative topos'': a topos-like structure whose internal language would be closer to the formal systems constructivists actually use. Ideally, one could develop for these predicative toposes an equally rich and interesting theory, which would be closer in spirit to constructive mathematics as it is practised.

The first proposals for such a notion of a predicative topos were put forward by Moerdijk and Palmgren (see their notions of a $\Pi W$-pretopos \cite{moerdijkpalmgren00} and a stratified pseudotopos \cite{moerdijkpalmgren02}). But before we go into this, it is good to first get an idea of what properties one would like predicative toposes to have. A list of desiderata would probably include:
\begin{enumerate}
\item A predicative topos should at least be a locally cartesian closed pretopos with natural numbers object.
\item Every topos should be a predicative topos.
\item The setoids in MLTT should form a predicative topos.
\item The sets in {\bf CZF} should form a predicative topos.
\item Predicative toposes should be environment in which one can do formal topology.
\item Predicative toposes should be closed under internal sheaves and realizability.
\item There should be interesting examples of predicative toposes that are not toposes.
\end{enumerate}
It might be good to make a few remarks concerning these requirements.
\begin{description}
\item[Ad 3] Because MLTT is an intensional theory, the ``sets'' one works with are not really the types, but types together with an equivalence relations (i.e., setoids). This is related to the distinction that Bishop makes between sets and presets. So we try to capture the categorical properties of the sets rather than the presets (see \cite{palmgren12} for a discussion of this point).
\item[Ad 5] Recently a lot of constructivists have worked on formal topology. Essentially, this is locale theory in a predicative metatheory. The idea behind point (5) is that predicative toposes should stand to formal spaces in the same way as toposes stand to locales. That means both that one should be able to do formal topology inside predicative toposes and that one should be able to take sheaves over a formal space internally to a predicative topos and again get a predicative topos.
\item[Ad 6] In the same way, we want to be able to take sheaves or do realizability inside a predicative topos and again get a predicative topos back. As a result, sheaves and realizability should not only be model constructions giving rise to predicative toposes, but actually be closure properties of the class of predicative toposes.
\end{description}
It turns out that it is very hard, indeed impossible, to satisfy all of these desiderata.

First of all, there are two dilemmas. The first dilemma is how many inductively defined sets one adds to one's notion of a predicative topos. It is often said that MLTT is an open framework to which one may add as many inductively generated sets as one wants. But we need to fix something in order to get going. In {\bf CZF}, however, there are very few inductively generated sets. In this connection, Peter Aczel proposed to add another axiom to {\bf CZF} (the Regular Extension Axiom, or REA). The question is to which extent we follow him in this.

The second dilemma is how much choice we add to our notion of a predicative topos. Most constructivists are willing to countenance certain forms of choice; and, as we will see, one seems to be compelled to use some amount of choice if one can no longer reason impredicatively. In a general topos, however, only very few choice principles hold. 

These dilemmas turn into a real problem if one starts to do formal topology. The general notion of a formal space is really too ill-behaved: in a predicative setting only those formal spaces that are set-presented (in Peter Aczel's terminology) behave well. For example, working in {\bf CZF}, one need not get a model of {\bf CZF} if one takes sheaves over a formal space that is not set-presented (see \cite{grayson83, gambino06}).

On the other hand many formal spaces cannot be shown to be set-presented inside {\bf CZF}; this includes formal Baire space (see \cite{bergmoerdijk10c}). Formal Baire space is an example of an ``inductively generated formal space'' \cite{coquandetal03} and to show that these are set-presented we have to go beyond {\bf CZF}. So if we insist that internally to a predicative topos we should be able to prove that ``inductively generated formal spaces are set-presented'', we have to drop requirement (4); and, indeed, that is what we will do.

As it turns out, proving the statement ``inductively generated formal spaces are set-presented'' seems to be one of those things for which we need to add some inductively generated sets and choice. Let us take the inductively generated sets first.

What kind of inductively defined sets would one like to add? The obvious choice seems to be the W-types of MLTT. In \cite{moerdijkpalmgren00}, Moerdijk and Palmgren observe that these can be captured categorically as initial algebras for polynomial functors, and adding these to a locally cartesian closed pretopos leads to their notion of a $\Pi W$-pretopos.

Unfortunately, we still seem to need some choice. We have not been able to prove that
\begin{itemize}
\item[(i)] inductively generated formal spaces are set-presented,
\item[(ii)] taking sheaves over a set-presented formal space gives you a category which again has W-types 
\end{itemize}
without some form of choice (see \cite{bergmoerdijk10b} in connection with (ii)). We do not have proofs that these results cannot be proved internally to a $\Pi W$-pretopos (that would probably be very hard), but we are sceptical that it can be done.

So what form of choice do we add? If we would follow the lead of MLTT again, we would postulate the existence of enough projectives (because that is what happens in the category of setoids). That would allow us to prove the two theorems above, but it would not be compatible with desideratum (6): the existence of enough projectives is not stable under sheaves.

For solving precisely this issue, Moerdijk and Palmgren introduced in \cite{moerdijkpalmgren02} an axiom they called the ``Axiom of Multiple Choice'' (AMC). We have renamed their axiom the ``Strong Axiom of Multiple Choice'' (SAMC), because there is a weaker axiom which does the job as well: we have decided to reserve the name ``Axiom of Multiple Choice'' for this weaker axiom (see \cite{bergmoerdijk10}). 

As a result we have the following notion of a predicative topos: a $\Pi W$-pretopos satisfying AMC. It turns out that in a predicative topos both (i) and (ii) are provable. In fact, these results do not only hold for formal spaces but for general sites; we will work this out in Section 8. (The results will look a bit different there: in fact, without a distinction between sets and classes as in {\bf CZF}, it is hard to talk about formal spaces that are not set-presented: set-presentedness will become part of the definition.)

$\Pi W$-pretoposes that satisfy SAMC will be called strong predicative toposes. We have no clear preference as to what should be the ``true'' notion of a predicative topos: whether it should be the strong or the weak one. Right now, everything one wants to do with SAMC one can also do with ordinary AMC; but SAMC holds in all the examples we know of and its stability under various constructions may be easier to show due to its equivalence to a new axiom called RP (see Section 5).

But one may object to both notions, because it is not clear that they satisfy requirement (2), saying that every topos should be an example; and, in fact, they do not, for AMC is not provable in higher-order arithmetic. It is not even provable in {\bf ZF}, as we will show in Section 6 (note that Rathjen and Lubarsky have shown the same for REA, see \cite{rathjenlubarsky03}). This might dismay topos theorists, but it should be kept in mind that:
\begin{itemize}
\item AMC follows from the axiom of choice, so is true in $\Sets$. And because AMC will be stable under realizability and sheaves, it will hold in realizability and sheaf categories defined over $\Sets$.
\item AMC could have certain desirable consequences, which are not provable in higher-order arithmetic. For example, in {\bf ZF} it implies that every algebraic theory has free algebras, a result which is not provable in {\bf ZF} proper (see Section 6).
\end{itemize}

However, both our notions of a predicative topos satisfy all the other criteria in the list (all of them, apart from (2) and (4), that is). Essentially that is the upshot of \cite{moerdijkpalmgren02,bergmoerdijk08, bergmoerdijk09, bergmoerdijk10b, bergmoerdijk10}. But there the results are stated in the context of algebraic set theory and Aczel's constructive set theory {\bf CZF}, which might not be congenial to everyone. Indeed, one of the main purposes of this paper is to explain these results in a more categorical language and make clear what is their relevance to predicative topos theory. We also collect important results from other sources hoping that this paper can act as a starting point and reference for future work in this area.

But we do not only collect or restate old results; there will be some novel results as well. Concretely, these are:
\begin{enumerate}
\item We show that SAMC is equivalent to a new axiom, which we call RP. 
\item We use this to show that SAMC is preserved by ex/reg-completion and realizability.
\item We show that AMC is unprovable in ZF.
\item We show that the subcountable objects in the effective topos and the ex/lex-completions of the categories of topological spaces and $T_0$-spaces are examples of strong predicative toposes.
\item We show that (i) is provable in predicative toposes for general sites.
\end{enumerate}

The contents of this paper will be as follows. In Section 2 we will establish categorical notation and terminology and give a precise definition of the notion of a $\Pi W$-pretopos. In Section 3 we will introduce both variants of the Axiom of Multiple Choice and in Section 4 we will introduce a reflection principle and show it is equivalent to SAMC. We will study AMC in the context of the classical set theory {\bf ZF} in Section 5. In Section 6 we will give examples of predicative toposes and we will also discuss closure under realizability. Section 7 will be devoted to closure under sheaves and showing that (i) and (ii) hold in predicative toposes for general sites. Finally, to conclude our paper, we will indicate directions for future research in Section 8.

\section{Categorical preliminaries}

It will be the purpose of this section to establish the categorical terminology and notation that we will need. In particular, we define precisely the notion of a $\Pi W$-pretopos. But first we recall some results from the theory of exact completions; these will become important because we will obtain most interesting examples of predicative toposes as exact completions.

\begin{defi}{regularcat}
Let \ct{C} be a category. A map $f: B \to A$ is a \emph{cover}, if the only subobject of $A$ through which it factors is $A$ itself. The category \ct{C} will be called \emph{regular}, if it has finite limits, every map factors as a cover followed by a mono, and covers are stable under pullback. A functor $F: \ct{C} \to \ct{D}$ will be called \emph{regular}, if it preserves finite limits and covers.
\end{defi}

\begin{defi}{eqrelexactcat}
Let \ct{C} be a category with finite limits. A subobject $(r_0, r_1): R \subseteq X \times X$ in \ct{C} will be called an \emph{equivalence relation}, if for any object $A$ in \ct{C} the induced map
\[ {\rm Hom}(A, R) \to {\rm Hom}(A, X)^2: f \mapsto (r_0 \circ f, r_1 \circ f) \]
is injective and determines an equivalence relation on ${\rm Hom}(A, X)$. A map $q: X \to Q$ will be called a \emph{quotient} of the equivalence relation, if the diagram
\diag{
  R \ar@<.8ex>[r]^{r_0} \ar@<-.8ex>[r]_-{r_1} & X \ar[r]^q & Q
}
is both a pullback and a coequaliser, in which case the diagram is called \emph{exact}. It is called \emph{stably exact}, when for any \func{p}{P}{Q} the diagram
\diag{
  p^{*}R \ar@<.8ex>[r]^{p^{*}r_0} \ar@<-.8ex>[r]_-{p^{*}r_1} & p^{*}X \ar[r]^{p^{*}q} & P
}
is also exact. The category \ct{C} will be called \emph{exact}, if it is regular and every equivalence relation fits into a stably exact diagram.
\end{defi}

Every category with finite limits can be turned into an exact category in a suitably universal way. More precisely, for any category with finite limits \ct{C} there is an exact category $\ct{C}_{ex/lex}$ (its ex/lex-completion) together with a finite limit preserving functor ${\bf y}: \ct{C} \to \ct{C}_{ex/lex}$ such that for any exact category \ct{D} precomposing with {\bf y} induces an equivalence of functor categories between the finite limit preserving functors from \ct{C} to \ct{D} and the regular functors from $\ct{C}_{ex/lex}$ to \ct{D}. In addition, regular categories can be turned into exact categories, while preserving the regular structure: so for any regular category \ct{C} there is an exact category $\ct{C}_{ex/reg}$ (its ex/reg-completion) together with a regular functor ${\bf y}: \ct{C} \to \ct{C}_{ex/lex}$ such that for any exact category \ct{D} precomposing with {\bf y} induces an equivalence of functor categories between the regular functors from \ct{C} to \ct{D} and the regular functors from $\ct{C}_{ex/reg}$ to \ct{D}. For the purposes of this paper, we do not need to know how one constructs these completions (for that see \cite{carboni95}), but we do need to be able to recognise them.

\begin{defi}{coveringfunctor}
If $X$ and $Y$ are two objects in a category \ct{C}, we say that $Y$ \emph{is covered by} $X$ if there is a cover $q: X \to Y$. A functor $F: \ct{C} \to \ct{D}$ will be called \emph{covering}, if every object $D$ in \ct{D} is covered by one in the image of $F$. It will be called \emph{full on subobjects} if every subobject of an object in the image of $F$ is isomorphic to an object in the image of $F$.
\end{defi}

\begin{defi}{extproj}
Let \ct{C} be a regular category. An object $P$ in \ct{C} is a \emph{projective} if every cover \func{p}{X}{P} splits. We will say that the category \ct{C} has \emph{enough projectives} if every object in \ct{C} is covered by a projective.
\end{defi}

\begin{prop}{recogexlexcompl}
Let \ct{C} be a category with finite limits, \ct{D} be an exact category and $F: \ct{C} \to \ct{D}$ be a finite limit preserving functor. Then \ct{D} is equivalent to the ex/lex-completion of \ct{C} via an equivalence which commutes with $F: \ct{C} \to \ct{D}$ and ${\bf y}: \ct{C} \to \ct{C}_{ex/lex}$ iff
\begin{enumerate}
\item $F$ is full and faithful,
\item $F$ is covering,
\item and the objects in the image of $F$ are, up to isomorphism, the projectives in \ct{D}.
\end{enumerate}
\end{prop}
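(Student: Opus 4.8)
The plan is to work entirely with the canonical comparison functor supplied by the universal property. Since $F : \ct{C} \to \ct{D}$ preserves finite limits and \ct{D} is exact, there is a regular functor $G : \ct{C}_{ex/lex} \to \ct{D}$, unique up to isomorphism, with $G \circ {\bf y} \cong F$; and an equivalence $\ct{D} \simeq \ct{C}_{ex/lex}$ commuting with $F$ and ${\bf y}$ exists precisely when this $G$ is an equivalence. So the proposition reduces to: $G$ is an equivalence iff (1)--(3) hold. Throughout I would use the standard facts about the ex/lex-completion (\cite{carboni95}): ${\bf y}$ is full and faithful, it is covering, and its image is, up to isomorphism, exactly the class of projectives of $\ct{C}_{ex/lex}$.

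For the ``only if'' direction I would argue that these properties transport along an equivalence. If $E : \ct{C}_{ex/lex} \to \ct{D}$ is an equivalence with $E \circ {\bf y} \cong F$, then $E$ is regular and preserves and reflects covers, coverings and projectives, so $F \cong E \circ {\bf y}$ inherits fullness, faithfulness, the covering property, and the identification of its image with the projectives of \ct{D}, giving (1)--(3). For the ``if'' direction, assume (1)--(3) and record what they buy about $G$. By (3) the projectives of \ct{D} are exactly the $FC \cong G{\bf y}C$, so $G$ sends projectives to projectives and is essentially surjective onto them; the chain $\ct{C}_{ex/lex}({\bf y}C, {\bf y}C') \cong \ct{C}(C, C') \cong \ct{D}(FC, FC')$, using faithful-fullness of ${\bf y}$ and hypothesis (1), shows $G$ is full and faithful on projectives; and by (2) every object of \ct{D} is covered by some $FC$, which is projective by (3), so \ct{D} has enough projectives. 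The goal is then to bootstrap this equivalence-on-projectives to an equivalence everywhere, exploiting that $G$ is regular (hence preserves finite limits, covers, and quotients of equivalence relations) and that in an exact category with enough projectives every object is a quotient of a pair of maps between projectives.

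The key lemma, which I expect to be the main obstacle, is a reflection principle: if $a : P' \to P$ is a map between projectives and $G(a)$ is a cover, then $a$ is a cover. This is exactly where projectivity is indispensable. Since $G(a)$ is a cover and $GP$ is projective, $G(a)$ splits, say $G(a) \circ s = \mathrm{id}_{GP}$; as $GP, GP'$ are projective and $G$ is full and faithful on projectives, $s = G(\sigma)$, and then $G(a\sigma) = \mathrm{id}_{GP} = G(\mathrm{id}_P)$ forces $a\sigma = \mathrm{id}_P$, so $a$ is split epi and hence a cover. From this I would deduce that $G$ reflects ``a mono into a projective being an isomorphism'': given $m : S \hookrightarrow P$ with $P$ projective and $G(m)$ an isomorphism, cover $S$ by a projective $c : P' \epi S$ and apply the lemma to $mc$ to see $mc$ is a cover, whence $S = P$ as subobjects and $m$ is iso.

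With the reflection lemma in hand the three conditions for an equivalence follow in order. For faithfulness, given $f, g : X \to Y$ with $G(f) = G(g)$, I cover $X$ by a projective and reduce to $X = P$ projective; lifting $f, g$ through a projective cover $q : Q \epi Y$ to $f', g' : P \to Q$ and comparing with the kernel pair $K \hookrightarrow Q \times Q$ of $q$, the hypothesis says $G\langle f', g' \rangle$ factors through $GK$, so the pullback of $K$ along $\langle f', g'\rangle$ is a mono into the projective $P$ sent by $G$ to an isomorphism; by the lemma it is an isomorphism, i.e. $\langle f', g'\rangle$ factors through $K$ and $f = g$. For fullness, given $k : GX \to GY$, cover $X$ by a projective $p : P \epi X$ with kernel pair $K \rightrightarrows P$, lift $k \circ G(p)$ through a projective cover of $Y$ and use fullness on projectives to get $\tau : P \to Q$ with $G(q\tau) = k \circ G(p)$; faithfulness shows $q\tau$ coequalises $K \rightrightarrows P$, so it descends to $\ell : X \to Y$, and cancelling the cover $G(p)$ yields $G(\ell) = k$. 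Finally, for essential surjectivity I present an arbitrary $D$: cover it by a projective $GP_0$, cover the kernel pair by a projective $GP_1$, lift the resulting pair $GP_1 \rightrightarrows GP_0$ to $P_1 \rightrightarrows P_0$ by fullness on projectives, and take the quotient $X$ of the generated equivalence relation in $\ct{C}_{ex/lex}$; since $G$ is regular it preserves this quotient, so $G(X) \cong D$. Hence $G$ is full, faithful and essentially surjective, and the proof is complete.
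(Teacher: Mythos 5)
The paper offers no argument here at all---it simply defers to Carboni and Vitale---so your proposal is doing genuine work that the paper's ``proof'' does not. Your overall strategy is the standard one and is essentially correct: reduce everything to the canonical comparison functor $G:\ct{C}_{ex/lex}\to\ct{D}$ given by the universal property, transport properties along an equivalence for the ``only if'' direction, and for the ``if'' direction bootstrap fullness, faithfulness and essential surjectivity from the behaviour of $G$ on projectives via your reflection lemma (which is the right key lemma, and whose proof checks out, as do the faithfulness and fullness arguments). Be aware, though, that the argument is not self-contained: it presupposes the three ``standard facts'' about ${\bf y}$ (full and faithful, covering, image $=$ projectives), which are precisely the present proposition in the universal case $F={\bf y}$ and themselves require the explicit construction of $\ct{C}_{ex/lex}$; since the paper's stated policy is to cite Carboni--Vitale for exactly this, that is acceptable, but it should be said. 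The one step you must tighten is essential surjectivity: in a general exact category the equivalence relation ``generated by'' an arbitrary parallel pair need not exist, since that would require a transitive closure, so you cannot simply invoke it. What saves you is that the image $S$ of $\langle d_0,d_1\rangle:P_1\to P_0\times P_0$ is \emph{already} an equivalence relation: $G$ preserves images, so $G(S)$ is the kernel pair $R$ of $GP_0\twoheadrightarrow D$, and since by this stage $G$ is known to be full, faithful, regular and finite-limit preserving, it reflects the property of being an equivalence relation (equivalently, one lifts the reflexivity, symmetry and transitivity witnesses through the projective covers, using that the projectives of $\ct{D}$ are closed under finite limits because they are the image of the finite-limit-preserving $F$). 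With $S$ an honest equivalence relation, exactness of $\ct{C}_{ex/lex}$ supplies the quotient $X$ and regularity of $G$ gives $G(X)\cong D$, completing your argument.
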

\begin{proof}
See \cite{carboni95}.
\end{proof}

\begin{prop}{recogexregcompl}
Let \ct{C} be a regular category, \ct{D} be an exact category and $F: \ct{C} \to \ct{D}$ be a regular functor. Then \ct{D} is equivalent to the ex/reg-completion of \ct{C} via an equivalence which commutes with $F: \ct{C} \to \ct{D}$ and ${\bf y}: \ct{C} \to \ct{C}_{ex/reg}$ iff $F$ is full and faithful, covering, and
full on subobjects.
\end{prop}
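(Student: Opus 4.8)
The plan is to use the universal property of the ex/reg-completion to reduce the statement to a question about a single comparison functor, and then to check that this functor is an equivalence. Since $F : \ct{C} \to \ct{D}$ is regular and $\ct{D}$ is exact, the universal property supplies a regular functor $\bar F : \ct{C}_{ex/reg} \to \ct{D}$, unique up to natural isomorphism, with $\bar F \circ {\bf y} \cong F$; moreover any regular functor $\ct{C}_{ex/reg} \to \ct{D}$ commuting with ${\bf y}$ and $F$ is isomorphic to $\bar F$. Hence the assertion that $\ct{D}$ is equivalent to $\ct{C}_{ex/reg}$ via an equivalence commuting with $F$ and ${\bf y}$ is precisely the assertion that $\bar F$ is an equivalence of categories, and the whole proposition becomes: $\bar F$ is an equivalence iff $F$ is full, faithful, covering, and full on subobjects.

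For necessity I would first record that the unit ${\bf y} : \ct{C} \to \ct{C}_{ex/reg}$ is itself full, faithful, covering, and full on subobjects, which is a basic property of the ex/reg-completion (see \cite{carboni95}). Granting this, if $\bar F$ is an equivalence then $F \cong \bar F \circ {\bf y}$ is the composite of such a functor with an equivalence, and each of the four properties is preserved (and reflected) by equivalences, so $F$ inherits them.

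For sufficiency I would show that $\bar F$ is essentially surjective, full, and faithful. Essential surjectivity is the cleanest part: given $D \in \ct{D}$, since $F$ is covering there is a cover $c : F(C) \epi D$, and in the exact category $\ct{D}$ this exhibits $D$ as the quotient of the kernel pair $R \rightrightarrows F(C)$, an equivalence relation on $F(C) = \bar F {\bf y}(C)$. As $R \subseteq F(C) \times F(C) \cong F(C \times C)$ and $F$ is full on subobjects, $R \cong F(S)$ for a subobject $S \subseteq C \times C$; fullness and faithfulness of $F$ then force $S$ to be an equivalence relation on $C$ in $\ct{C}$. Its image ${\bf y}(S) \rightrightarrows {\bf y}(C)$ has a quotient $Q$ in $\ct{C}_{ex/reg}$, and since a regular functor between exact categories preserves quotients of equivalence relations (it preserves the covering quotient map and its kernel pair), we get $\bar F(Q) \cong F(C)/R \cong D$.

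The heart of the argument, and the step I expect to be the main obstacle, is fullness and faithfulness of $\bar F$. Here I would present each object of $\ct{C}_{ex/reg}$ as a quotient of something in the image of ${\bf y}$ (possible since ${\bf y}$ is covering) and represent a morphism between two such objects by its graph, that is, a functional relation: a subobject of the product that is total and single-valued. Because $F$ is full on subobjects, such subobjects of products of images correspond to subobjects in $\ct{C}$, and because $F$ is full and faithful the totality and single-valuedness conditions transfer across, setting up a bijection between morphisms $X \to Y$ in $\ct{C}_{ex/reg}$ and morphisms $\bar F X \to \bar F Y$ in $\ct{D}$. The delicate points will be verifying that this correspondence is independent of the chosen covers and presentations and that it respects composition, which requires careful bookkeeping of how functional relations compose and descend along the quotient maps.
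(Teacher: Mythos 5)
Your proposal is correct in outline, but note that the paper offers no argument of its own here — its proof is simply a citation of \cite{carboni95} — and what you have written is essentially a reconstruction of the standard recognition argument from that literature: reduce via the universal property to showing that the comparison functor $\bar F: \ct{C}_{ex/reg} \to \ct{D}$ is an equivalence, obtain essential surjectivity from covering plus exactness plus fullness on subobjects, and obtain fullness and faithfulness by the calculus of functional relations. The only points deserving care are the ones you already flag (independence of the chosen presentations and compatibility with composition of functional relations), together with one small definitional subtlety: as stated in this paper, ``full on subobjects'' only asserts that a subobject of $F(C)$ is isomorphic to \emph{some} object in the image of $F$, so in the essential-surjectivity step you must use fullness and faithfulness of $F$ to upgrade the isomorphism $R \cong F(S)$ to the statement that the inclusion $R \hookrightarrow F(C \times C)$ is $F$ of a monomorphism $S \hookrightarrow C \times C$ — which is exactly what you implicitly do.
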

\begin{proof}
See \cite{carboni95}.
\end{proof}

\begin{coro}{exregexlexcoincidence}
Let \ct{C} be a regular category and assume that \ct{C} has enough projectives and the projectives in \ct{C} are closed under finite limits (meaning that the limit of any finite diagram whose objects are all projective is again projective). Then ${\rm Proj}(\ct{C})_{ex/lex}$ and $\ct{C}_{ex/reg}$ are equivalent, where ${\rm Proj}(\ct{C})$ is the full subcategory of \ct{C} on the projectives.
\end{coro}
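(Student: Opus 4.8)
The plan is to recognise $\ct{C}_{ex/reg}$ directly as the ex/lex-completion of $\ct{P} := {\rm Proj}(\ct{C})$ by verifying the hypotheses of Proposition \ref{recogexlexcompl}. Write $i: \ct{P} \hookrightarrow \ct{C}$ for the inclusion and ${\bf y}: \ct{C} \to \ct{C}_{ex/reg}$ for the canonical regular functor; by Proposition \ref{recogexregcompl} this ${\bf y}$ is full, faithful, covering and full on subobjects. The hypothesis that the projectives are closed under finite limits means precisely that $\ct{P}$ has finite limits, computed as in $\ct{C}$, and that $i$ preserves them. Hence the composite $F := {\bf y}\circ i: \ct{P} \to \ct{C}_{ex/reg}$ is a finite-limit-preserving functor into an exact category, and it is full and faithful since both $i$ (full subcategory) and ${\bf y}$ are. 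So it remains to check the three numbered conditions of Proposition \ref{recogexlexcompl}.

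That $F$ is covering is straightforward. Since ${\bf y}$ is covering, any object $D$ of $\ct{C}_{ex/reg}$ admits a cover ${\bf y}(X) \epi D$ with $X$ in $\ct{C}$; since $\ct{C}$ has enough projectives (Definition \ref{extproj}) there is a cover $P \epi X$ with $P$ projective, and ${\bf y}$ preserves this cover. As covers in a regular category are closed under composition, the composite ${\bf y}(P) = F(P) \epi D$ is a cover, so every object is covered by one in the image of $F$.

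The remaining, and main, task is condition (3): that the image of $F$ consists, up to isomorphism, exactly of the projectives of $\ct{C}_{ex/reg}$. I would first record that ${\bf y}$ \emph{reflects} covers: given $f: X \to P$ with ${\bf y}(f)$ a cover, factor $f = m\circ e$ in the regular category $\ct{C}$ as a cover followed by a mono; then ${\bf y}(e)$ is a cover and ${\bf y}(m)$ is a mono, so the cover ${\bf y}(f)$ factors through the subobject ${\bf y}(m)$, forcing ${\bf y}(m)$ to be iso, whence $m$ is iso (${\bf y}$ being full and faithful reflects isomorphisms) and $f$ is a cover. Using this, for $P$ projective in $\ct{C}$ and a cover $q: D \epi {\bf y}(P)$, pick a cover $p: {\bf y}(X) \epi D$; then $qp = {\bf y}(f)$ for a cover $f: X \epi P$, which splits by projectivity of $P$, and transporting the splitting along ${\bf y}$ produces a section of $q$. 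Thus every object ${\bf y}(P)$ in the image of $F$ is projective in $\ct{C}_{ex/reg}$.

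The hard part is the converse inclusion. Let $D$ be projective in $\ct{C}_{ex/reg}$. As in the covering argument above, there is a cover ${\bf y}(P) \epi D$ with $P$ projective in $\ct{C}$; since $D$ is projective this cover splits, exhibiting $D$ as the splitting of an idempotent $h$ on ${\bf y}(P)$. By fullness and faithfulness, $h = {\bf y}(\epsilon)$ for an idempotent $\epsilon: P \to P$ in $\ct{C}$. The key observation is that $\ct{C}$, having finite limits, is idempotent complete, so $\epsilon$ splits through some object $E$; moreover $E$ is a retract of the projective $P$, and a retract of a projective is again projective (lift along the pullback of a given cover to $P$). Applying ${\bf y}$ sends the splitting of $\epsilon$ to a splitting of $h = {\bf y}(\epsilon)$, so by uniqueness of splittings $D \cong {\bf y}(E)$ with $E$ projective, i.e. $D$ lies in the image of $F$ up to isomorphism. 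This establishes condition (3), and Proposition \ref{recogexlexcompl} then yields $\ct{C}_{ex/reg} \simeq {\rm Proj}(\ct{C})_{ex/lex}$ compatibly with the respective embeddings. I expect the idempotent-splitting step in this last paragraph to be the main obstacle, since it is where one must convert a genuinely new projective of the completion back into (the image of) an honest projective of $\ct{C}$, relying on finite completeness of $\ct{C}$ and closure of projectives under retracts.
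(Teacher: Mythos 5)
Your proof is correct, and its overall architecture coincides with the paper's: verify the three recognition conditions of \refprop{recogexlexcompl} for the composite ${\rm Proj}(\ct{C}) \hookrightarrow \ct{C} \to \ct{C}_{ex/reg}$. The covering condition and the direction ``${\bf y}Q$ is projective for $Q$ projective'' are handled exactly as in the paper (your explicit remark that ${\bf y}$ reflects covers is a detail the paper elides with ``it is not hard to see that $f$ is a cover''). The one place where you genuinely diverge is the converse inclusion, i.e.\ showing a projective $D$ of $\ct{C}_{ex/reg}$ lies in the image. The paper covers $D$ by an arbitrary ${\bf y}X$, observes that the section of the split cover makes $D$ a \emph{subobject} of ${\bf y}X$, and then invokes the fact that ${\bf y}$ is \emph{full on subobjects} (part of \refprop{recogexregcompl}) to land back in the image of ${\bf y}$, finally checking that the resulting object is projective in $\ct{C}$. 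You instead take the cover with projective domain from the start, realise $D$ as the splitting of an idempotent on ${\bf y}P$, transport that idempotent to $\ct{C}$ by full faithfulness, split it there using equalizers, and use closure of projectives under retracts. Both arguments are sound; the paper's is shorter because fullness on subobjects is already available from the recognition of ex/reg-completions, while yours is marginally more self-contained in that it needs only that ${\bf y}$ is full, faithful, covering and finite-limit-preserving, at the price of the idempotent-splitting detour and the retract lemma.
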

\begin{proof}
It is clear that the inclusion $F: {\rm Proj}(\ct{C}) \to \ct{C}_{ex/reg}$ is full and faithful, cartesian and covering. So we need to show that the objects in the image of $F$ are, up to isomorphism, the projectives of $\ct{C}_{ex/reg}$.

So suppose $P$ is projective in $\ct{C}_{ex/reg}$. Since ${\bf y}: \ct{C} \to \ct{C}_{ex/reg}$ is covering, there exists a cover ${\bf y}X \to P$. Because $P$ is projective, this cover splits and $P$ must actually be subobject of ${\bf y}X$. Since {\bf y} is full on subobjects, $P$ is isomorphic to some ${\bf y}Q$ with $Q$ in $\ct{C}$. This object $Q$ has to be projective in \ct{C}, because ${\bf y}$ is regular and full and faithful; so $P$ is isomorphic to an object in ${\rm Proj}(\ct{C})$.

Conversely, suppose $Q$ is projective in $\ct{C}$; we want to show that ${\bf y}Q$ is projective in $\ct{C}_{ex/reg}$. So let $q: X \to {\bf y}Q$ be a cover. Since {\bf y} is covering, we can cover $X$ via some $p: {\bf y}A \to X$. Since {\bf y} is full, there is a map $f: A \to Q$ in \ct{C} with ${\bf y}f = qp$. It is not hard to see that $f$ is a cover and therefore must have a section $s: Q \to A$. Now $qp{\bf y}s = {\bf y}f{\bf y}s = \id$, so $q$ splits.
\end{proof}

We will now define the notion of a $\Pi W$-pretopos. Before we can do that, we first need to define locally cartesian closed categories, polynomial functors and W-types.

\begin{defi}{lccc}
A category \ct{C} which has all finite limits is called \emph{locally cartesian closed} if for any map $f: Y \to X$ the pullback functor
\[ f^*: \ct{C}/X \to \ct{C}/Y \]
has a right adjoint. This right adjoint is usually denoted $\Pi_f$.
\end{defi}

Alternatively we could say that a category is locally cartesian closed if it has all finite limits and all its slice categories are cartesian closed.

\begin{defi}{Wtypes}
Let $f: B \to A$ be a map in a locally cartesian closed category \ct{C}. The \emph{polynomial functor} associated to $f$ is the endofunctor
\diag{ P_f: \ct{C} \ar[r]^{B \times -} & \ct{C}/B \ar[r]^{\Pi_f} & \ct{C}/A \ar[r]^{\Sigma_A} & \ct{C}, }
where $\Sigma_A$ is the forgetful functor $\ct{C}/A \to \ct{C}$ which only remembers the domain.

The initial algebra for $P_f$, whenever it exists, is called the \emph{W-type} associated to $f$. If all polynomial functors have initial algebras, we say that \ct{C} \emph{has W-types}.
\end{defi}

For more on these W-types, we refer to \cite{moerdijkpalmgren00} and \cite{berg09}.

\begin{defi}{lextpretopos}
A category which has all finite limits and finite sums which are disjoint and stable under pullback is called \emph{lextensive}. A category which is both lextensive and exact is called a \emph{pretopos}.
\end{defi}

\begin{defi}{PiWpretopos}
A $\Pi W$-pretopos is a locally cartesian closed pretopos having all W-types.
\end{defi}

When we write topos in this paper, we will always mean an elementary topos with natural numbers object.

\begin{theo}{topisPiWpretopos}
Every topos is a $\Pi W$-pretopos.
\end{theo}
\begin{proof}
It is well-known that every topos is a locally cartesian closed pretopos. The existence of W-types is Proposition 2.3.5 in \cite{pareschumacher78} (see also \cite{blass83} and \cite{moerdijkpalmgren00}).
\end{proof}

\section{The Axiom of Multiple Choice}

In this section we will introduce both variants of the Axiom of Multiple Choice (see also \cite{moerdijkpalmgren02, bergmoerdijk10}). Before we do that, we first define covering and (strong) collection squares.

Throughout this section we will work in a category \ct{E} which is locally cartesian closed, lextensive and regular. Such a category is in particular a Heyting category, which means that it has an internal logic which is a typed version of first-order intuitionistic logic with (dependent) product and sum types. We will often exploit this fact. Readers who need to know more about the internal logic we refer to \cite{makkaireyes77, johnstone02b}.

\begin{defi}{coveringsquare}
We call a square
\diag{ D \ar[r]^{q} \ar[d]_g & B \ar[d]^f \\
C \ar[r]_{p} & A }
a \emph{covering square}, if both $p$ and the canonical map $D \to B \times_A C$ are covers. When such a covering square exists with a map $g$ on the left and $f$ on the right, we say that $g$ \emph{covers} $f$ and $f$ \emph{is covered by} $g$.
\end{defi}

\begin{rema}{asymmetry} Note that the orientation is important for being a covering square, as the maps $p$ and $f$ play different roles. So being a covering square is really a property of an \emph{oriented} square. To fix things, we will always draw covering squares in such a way that the property holds from ``left to right'' (as in the definition), instead of from ``top to bottom''. For (strong) collection squares the same convention will apply.
\end{rema}

\begin{defi}{collectionsquare}
A square as the one above will be called a \emph{collection square}, if the following statement holds in the internal logic:  for every $c \in C$ and cover $e: E \epi D_c$ there is a $c' \in C$ with $p(c) = p(c')$ and a map $h: D_{c'} \to D_c$ over $B$ which factors through $e$. It will be called a \emph{strong collection square} if $h$ can be chosen to be a cover.
\end{defi}

\begin{rema}{different} In \cite{bergmoerdijk10} we required collection squares to be covering as well. This turned out to be less convenient here, so we no longer make that part of the definition. But in case the square \emph{is} covering the definition here agrees with the one in \cite{bergmoerdijk10}, as the following lemma shows.
\end{rema}

\begin{lemm}{equivcharcollsq}
A covering square
\diag{ D \ar[r]^{q} \ar[d]_g & B \ar[d]^f \\
C \ar[r]_{p} & A }
is a collection square iff the following statement holds in the internal logic: for all $a \in A$ and covers $e: E \epi B_a$
there is a $c \in p^{-1}(a)$ and a map $t: D_c \to E$ such that the triangle
\diag{ & E \ar[dr]^e & \\
D_c \ar[ur]^t \ar[rr]_{q_c = q \upharpoonright D_c} & & B_a }
commutes. 
\end{lemm}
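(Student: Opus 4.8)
The plan is to run the whole argument in the internal logic of \ct{E}, using two standard facts about its regular (Heyting) structure: a map is a cover precisely when it is internally surjective (i.e.\ $\forall y\,\exists x.\,h(x)=y$), and covers are closed under composition as well as stable under pullback (the latter being part of regularity, cf.\ Definition~\ref{regularcat}). The first move is to reformulate the covering hypothesis fibrewise. Reading ``the canonical map $D \to B\times_A C$ is a cover'' internally, it asserts that for every $c\in C$ and every $b\in B_{p(c)}$ there is some $d\in D_c$ with $q(d)=b$; that is, the restriction $q_c\colon D_c \to B_{p(c)}$ is a cover for each $c$. Combined with $p$ being a cover, this is exactly what it means for the square to be covering, and it is precisely the bridge between the fibres $D_c$ of the collection-square condition and the fibres $B_a$ of the claimed equivalent.

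For the implication from \emph{collection square} to the equivalent characterisation, I would fix $a\in A$ and a cover $e\colon E \epi B_a$. Since $p$ is a cover there is some $c\in p^{-1}(a)$, and pulling $e$ back along $q_c\colon D_c \to B_a$ produces a cover $e'\colon E' \epi D_c$ together with $\pi\colon E' \to E$ satisfying $q_c\circ e' = e\circ\pi$. Applying the collection-square property to $c$ and $e'$ yields a $c'$ with $p(c')=p(c)=a$ and a map $h\colon D_{c'} \to D_c$ over $B$ that factors as $h = e'\circ k$. Setting $t := \pi\circ k\colon D_{c'}\to E$, one computes $e\circ t = e\circ\pi\circ k = q_c\circ e'\circ k = q_c\circ h = q_{c'}$, using $q_c\circ h = q_{c'}$ (this is ``$h$ over $B$''). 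After renaming $c'$ to $c$, this is exactly the desired commuting triangle.

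For the converse, I would fix $c\in C$ and a cover $e\colon E \epi D_c$, and put $a = p(c)$. Here the fibrewise reformulation does the work: since $q_c\colon D_c \to B_a$ is a cover, the composite $q_c\circ e\colon E \epi B_a$ is again a cover, so the equivalent characterisation applies to it and delivers a $c'\in p^{-1}(a)$ and a map $t\colon D_{c'}\to E$ with $q_c\circ e\circ t = q_{c'}$. Then $h := e\circ t\colon D_{c'}\to D_c$ satisfies $p(c')=p(c)$, factors through $e$ by construction, and is over $B$ because $q_c\circ h = q_c\circ e\circ t = q_{c'}$ --- which is precisely the collection-square requirement.

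The hard part, such as it is, is the fibrewise reading of the covering hypothesis in the opening step: once one sees that ``$D \to B\times_A C$ is a cover'' amounts to ``each $q_c$ is a cover'', both directions reduce to routine manipulations of covers in the internal logic, with composition-closure driving the converse and pullback-stability driving the forward direction. The only points demanding a little care are the bookkeeping of the pullback $E'$ in the forward implication and the repeated verification that the maps produced genuinely lie over $B$.
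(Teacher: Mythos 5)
Your proof is correct and follows essentially the same route as the paper's: both directions pull back or compose with $q_c$ and then invoke, respectively, the collection-square property or the stated characterisation, exactly as in the paper. The fibrewise reading of the covering hypothesis (each $q_c$ is a cover) that you make explicit is used implicitly in the paper's converse direction, so your write-up is if anything slightly more careful on that point.
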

\begin{proof}
An easy exercise in using the internal logic: suppose the square is a collection square and we are given a cover $e: E \to B_a$. First, we find a $c \in C$ with $p(c) = a$, because $p$ is a cover, and we pull back $e$ along $q_c$ as in:
\diag{ Z \ar@{->>}[d]_f \ar@{->>}[r]^r & E \ar@{->>}[d]^e \\
D_c \ar@{->>}[r]_{q_c} & B_ a. }
Since the square is collection, we find a $c' \in C$ with $p(c') = p(c) = a$ and a map $s: D_{c'} \to Z$ such that $q_cfs = q_{c'}$. Hence $c'$ and $t := rs$ are as desired.

Conversely, suppose we are given an element
$c \in C$ and a cover $e: E \epi D_c$. Put $a := p(c)$ and consider $q_ce: E \to B_a$. We find an element $c' \in C$ with $p(c') = a$ and a function $s: D_{c'} \to E$ such that $q_{c'} = q_ces$. Then $c'$ and $h := es$ are as desired.
\end{proof}

We can now state the
\begin{description}
\item[(Strong) Axiom of Multiple Choice] Every map $f: Y \to X$ fits into the right-hand side of a square which is both covering and (strong) collection.
\end{description}
We will abbreviate the Axiom of Multiple Choice as AMC, and its strong version as SAMC. Clearly, SAMC implies AMC. (We believe that the converse fails. We do not have a proof, however, and we expect that finding one will be very difficult.)

The Axiom of Multiple Choice is really a weak choice principle. As such, it is implied by related choice principles. We will discuss a few of them, following \cite{moerdijkpalmgren02}.

\begin{defi}{intproj}
An object $P$ is called a \emph{choice object}, when for any cover $Y \to X$ and any arrow $T \times P \to X$, there exists a cover $T' \to T$ and map $T' \times P \to Y$ such that the square
\diag{ T' \times P \ar[r] \ar@{->>}[d] & Y \ar@{->>}[d] \\
       T \times P \ar[r] & X }
commutes.  An arrow $f: Y \to X$ is called a \emph{choice map}, if it is a choice object in the slice category $\ct{E}/X$.
\end{defi}

Since we are assuming that our base category \ct{E} is locally cartesian closed, an object $P$ in \ct{E} is a choice object iff the functor $(-)^P$ preserves covers (see e.g.~\cite{maclanemoerdijk92}). This is the same as saying that the following scheme is valid in the internal logic of \ct{E} for any object $X$:
\[ (\forall p \in P) \, (\exists x \in X) \, \varphi(p,x) \rightarrow (\exists f \in X^P) \,  (\forall p \in P) \, \varphi(p, f(p)). \]
For this reason choice objects are often called  \emph{internal projectives}, but we will not use this terminology here.

\begin{prop}{choicemapsandSAMC}
If every map $f: B \to A$ in \ct{E} is covered by a choice map, meaning that it fits into a covering square
\diag{ D \ar[r]^{q} \ar[d]_g & B \ar[d]^f \\
C \ar[r]_{p} & A }
with a choice map $g$ on the left, then \ct{E} satisfies SAMC.
\end{prop}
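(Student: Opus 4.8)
The plan is to take an arbitrary map $f : B \to A$, use the hypothesis to place it into a covering square with a choice map on the left, and then argue that this very square is automatically a \emph{strong} collection square. It will then witness SAMC for $f$, and since $f$ is arbitrary we are done. So fix $f : B \to A$ and apply the hypothesis to obtain a covering square
\diag{ D \ar[r]^{q} \ar[d]_g & B \ar[d]^f \\
C \ar[r]_{p} & A }
in which $g$ is a choice map. As the square is covering by construction, the only thing left to verify is that it is a strong collection square in the sense of Definition \ref{collectionsquare}.

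The key step is to unfold what it means for $g : D \to C$ to be a choice map, that is, a choice object in $\ct{E}/C$. Passing to the internal logic of $\ct{E}/C$ --- equivalently, reasoning in the internal logic of $\ct{E}$ in the presence of a generic point $c \in C$ --- the internal choice scheme associated with choice objects (displayed just after Definition \ref{intproj}) asserts precisely that each fibre $D_c$ is internally projective. I would apply this scheme to the formula $\varphi(d,x) \equiv \bigl(e(x) = d\bigr)$: given any cover $e : E \epi D_c$, surjectivity of $e$ gives $(\forall d \in D_c)(\exists x \in E)\, e(x) = d$, so the scheme produces a section $k \in E^{D_c}$ with $e \circ k = \id_{D_c}$. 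In other words, every cover of a fibre of $g$ splits in the internal logic.

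With this in hand, strong collection is immediate, and in fact one may always take $c' = c$. Given $c \in C$ and a cover $e : E \epi D_c$, let $k$ be the internal section just produced and set $h := e \circ k = \id_{D_c}$. Then $h$ is a cover, it lies over $B$ since $q_c \circ h = q_c$, it factors through $e$ by construction, and the side condition $p(c') = p(c)$ holds trivially. This is exactly the data required by Definition \ref{collectionsquare} for a strong collection square, so the square is both covering and strong collection, and SAMC follows.

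The main obstacle I anticipate is the passage from the external definition of a choice map --- a choice object in the slice $\ct{E}/C$ --- to the internal, fibrewise statement that each $D_c$ is projective; this relies on the characterisation of choice objects through validity of the choice scheme and on checking that the reasoning is stable enough to be carried out over the generic point of $C$. Once that translation is justified the remaining verification is routine, and it is worth noting that the clause of Definition \ref{collectionsquare} permitting $c' \neq c$ is not actually needed here: internal projectivity of the fibres already lets us split the cover over the same fibre.
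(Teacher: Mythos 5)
Your proof is correct and follows essentially the same route as the paper's: the paper's one-line argument is precisely that, since $g$ is a choice map, every cover of a fibre $D_c$ splits internally, so the given covering square is already a strong collection square. You have merely spelled out the translation from the choice-object scheme to internal splitting of covers, and the observation that one may take $c' = c$, both of which are implicit in the paper's proof.
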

\begin{proof}
This is immediate using the internal logic: since $g$ is a choice map, every cover $q: E \to D_c$ has a section (internally).
\end{proof}

\begin{prop}{enoughprojandSAMC}
If \ct{E} has enough projectives and the projectives in \ct{E} are closed under pullbacks, then every map in \ct{E} is covered by a choice map. Hence \ct{E} satisfies SAMC.
\end{prop}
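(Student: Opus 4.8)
The plan is to reduce everything to Proposition~\ref{choicemapsandSAMC}: once we know that every map in $\ct{E}$ is covered by a choice map, SAMC is immediate. So the real task is, given an arbitrary $f: B \to A$, to build a covering square whose left-hand leg is a choice map. I would construct this square directly from the projective covers supplied by the hypotheses.

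Concretely, first use that $\ct{E}$ has enough projectives to pick a cover $p: C \epi A$ with $C$ projective. Then form the pullback $B \times_A C$ and cover it by a projective as well, say $d: D \epi B \times_A C$ with $D$ projective; write $g: D \to C$ and $q: D \to B$ for the two composites of $d$ with the projections. The resulting square, with $g$ on the left, $f$ on the right, $p$ on the bottom and $q$ on top, is a covering square essentially by construction: $p$ is a cover, and the comparison map $D \to B \times_A C$ is exactly $d$, which is a cover. So the only thing left to prove is that $g: D \to C$ is a choice map, i.e.\ a choice object in the slice $\ct{E}/C$.

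This last step is the heart of the argument, and where both hypotheses are really used. Unwinding Definition~\ref{intproj} for $g$ in $\ct{E}/C$, I am given a cover $Y \epi X$ over $C$ together with a map $T \times_C D \to X$ over $C$, and I must produce a cover $T' \epi T$ and a map $T' \times_C D \to Y$ making the relevant square commute. The strategy is: cover $T$ by a projective $T' \epi T$ (enough projectives), and observe that $T' \times_C D$ is the pullback of the cospan $T' \to C \leftarrow D$. Since $T'$, $D$ and $C$ are all projective, the hypothesis that the projectives are closed under pullbacks makes $T' \times_C D$ projective. Projectivity of $T' \times_C D$ then lets me lift the composite $T' \times_C D \to T \times_C D \to X$ through the cover $Y \epi X$, and this lift is automatically a map over $C$; that is exactly the required data. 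Finally, \ct{E} satisfies SAMC by Proposition~\ref{choicemapsandSAMC}.

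The main obstacle — and the point that dictates the whole construction — is precisely the appearance of the parameter object $T$ (equivalently, of the product $T \times_C D$) in the definition of a choice object: it is not enough that $D$ be projective, one needs the fibred products $T' \times_C D$ to be projective too. The reason it is crucial to have chosen $C$ projective is that this is exactly what turns $T' \times_C D$ into a pullback of a cospan of projectives, so that closure under pullbacks applies. Note in particular that one does \emph{not} need the projectives to be closed under all finite limits (as in Corollary~\ref{exregexlexcoincidence}): working relatively to the projective object $C$ means the terminal object of \ct{E} never needs to be projective, and closure under pullbacks alone suffices.
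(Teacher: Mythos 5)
Your proof is correct and follows essentially the same route as the paper: the same covering square built from projective covers of $A$ and of $C \times_A B$, with the choice-map property of $g$ established by observing that products in $\ct{E}/C$ are pullbacks over the projective $C$, so closure under pullbacks applies. You merely spell out the lifting argument that the paper dismisses as ``easy to see,'' and your closing remark about why closure under pullbacks (rather than all finite limits) suffices is accurate.
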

\begin{proof}
If \ct{E} has enough projectives, then every map $f: B \to A$ is covered by a map between projectives $g: D \to C$: we choose $C$ to be a projective cover of $A$ and $D$ to be a projective cover of $C \times_A B$. It is immediate that $g$ is projective in $\ct{E}/C$. But it is also a choice object in this category, because if  projectives are closed under products, then every projective is automatically a choice object (that is easy to see); this applies to $\ct{E}/C$, because we are assuming the projectives in \ct{E} to be closed under pullbacks and we know that $C$ is projective.
\end{proof}

\section{A reflection principle}

In this section we introduce a reflection principle reminiscent of Peter Aczel's Regular Extension Axiom. It will turn out that this axiom is equivalent to SAMC. Here, as in the previous section, we will work in a locally cartesian closed lextensive and regular category \ct{E}.

\begin{defi}{smallmaps}
By a \emph{class of small maps} \smallmap{S} in \ct{E} we will mean a class of maps in \ct{E} satisfying at least the following axioms:
\begin{itemize}
\item[(S1)] (Pullback stability) If the square
\diag{ D \ar[d]_g \ar[r] & B \ar[d]^f \\
C \ar[r] & A }
is a pullback and $f \in \smallmap{S}$, then also $g \in \smallmap{S}$.
\item[(S2)] (Covering) If a square as the one above is covering and $g \in \smallmap{S}$, then also $f \in \smallmap{S}$.
\item[(S3)] (Collection) Any two arrows $p: Y \to X$ and $f: X \to A$ where $p$ is a cover and $f$ belongs to \smallmap{S} fit into a covering square
\diag{ Z \ar[d]_g \ar[r] & Y \ar@{->>}[r]^p & X \ar[d]^f \\
B \ar@{->>}[rr]_h & & A,}
where $g$ belongs to \smallmap{S}.
\end{itemize}
A \emph{representation} for a class of small maps \smallmap{S} is a morphism $\pi: E \to U \in \smallmap{S}$ such that any morphism $f \in S$ is covered by a pullback of $\pi$. More explicitly: any $f: Y \to X  \in \smallmap{S}$ fits into a diagram of the form
\diag{Y \ar[d]_f & A \ar[d] \ar[r] \ar@{->>}[l] & E \ar[d]^{\pi} \\
X & B \ar[r] \ar@{->>}[l] & U, }
where the left hand square is covering and the right hand square is a pullback. The class \smallmap{S} will be called \emph{representable}, if it has a representation.
\end{defi}

\begin{defi}{RPaxiom}
The \emph{reflection principle RP} states that every map belongs to a representable class of small maps.
\end{defi}

\begin{theo}{SAMCRPequivalent}
RP and SAMC are equivalent.
\end{theo}
\begin{proof}
We first show that RP implies SAMC. Suppose $f: B \to A$ belongs to a class of small maps \smallmap{S} with representation $\pi: E \to U$. Form a covering square
\diag{ D \ar[r] \ar[d] & B \ar[d]^f \\
C \ar[r] & A }
in which
\begin{eqnarray*}
C & = & \{ (a, u, p) \, : \, a \in A, u \in U, p: E_{u} \epi B_a \}, \\
D & = & \{ (a, u, p, e) \, : \, (a, u, p) \in C, e \in E_u \},
\end{eqnarray*}
and the maps are the obvious projections. We will now show that it is a strong collection square as well.

Reason in the internal logic: suppose $c = (a, u, p) \in C$ and $t: Y \epi E_u \cong D_c$ is a cover. Using the collection axiom we find a small $S$ and a cover $q: S \epi E_u$ factoring through $t$. By representability, there is a cover of the form $r: E_{u'} \epi S$ with $u' \in U$. Putting $h := qr$ and $c' = (a, u', ph)$, we see that $h$ is really a map $D_{c'} \cong E_{u'} \epi E_{u} \cong D_c$ over $B$; in addition, it factors through $t$ because $q$ does.

In order to show that SAMC implies RP, let $f: Y \to X$ be a map fitting into a diagram
\diag{ D \ar[d]_{\rho} \ar@{->>}[r] &  A \times_X Y \ar[d]^{q^*f} \ar@{->>}[r] & Y \ar[d]^f
\\
C \ar@{->>}[r] & A \ar@{->>}[r]_q & X}
with the square on the left both covering and strong collection, while the one on the right is a pullback with an cover $q$ at the bottom. Let $\smallmap{S}$ be the class of maps $g: T \to S$ fitting into diagrams of the form
\diag{T \ar[d]_g & \bullet \ar[d] \ar[r] \ar@{->>}[l] & D \ar[d]^{\rho} \\
S & \bullet \ar[r] \ar@{->>}[l] & C, }
where the left hand square is covering and the right hand square is a pullback. Alternatively, we can say that $g: T \to S$ belongs to \smallmap{S} iff the following statement holds in the internal logic:
\[ (\forall s \in S) \, (\exists c \in C) \, (\exists p: D_c \to T_s) \, p \mbox{ is surjective}. \]
It is immediate that \smallmap{S} is a class of maps satisfying (S1) and (S2). It also satisfies the collection axiom (S3); to show this we reason internally. Let $q: E \epi Z$ be a cover and $Z$ be ``small'' (in the sense of \smallmap{S}), meaning that there is a $c \in C$ and a cover $p: D_c \to Z$. Take the following pullback:
\diag{ F \ar@{->>}[d]_r \ar@{->>}[r] & E \ar@{->>}[d]^q \\
D_c \ar@{->>}[r]_p & Z }
Using the strong collection square property, we find a $c' \in C$ and a cover $h: D_{c'} \epi D_c$ factoring through $r$. Since $D_{c'}$ is obviously small, we have shown that \smallmap{S} satisfies (S3).

So \smallmap{S} is a class of small maps. As it is representable by construction and contains $f$, the proof is finished.
\end{proof}

\begin{rema}{RPandREA}
Although RP is reminiscent of REA, this result probably means that RP is a bit stronger. For although SAMC implies REA (see \cite{moerdijkpalmgren02}), we would conjecture that the converse fails.
\end{rema}

\section{The Axiom of Multiple Choice in ZF}

In this section we study AMC in the context of the classical set theory {\bf ZF}. Set-theoretically, AMC is the following statement:
\begin{quote}
For every set $X$ there is a set of surjections $\{ p_i: Y_i \to X \}$ onto $X$ such that for every surjection $q: Z \to X$ there is an $i \in I$ and a function $f: Y_i \to Z$ such that $p_i = q \circ f$.
\end{quote}

The following result was already shown for strong AMC by Rathjen (see \cite{rathjen06b}).

\begin{theo}{AMCimpliesclassregcard} In {\bf ZF}, AMC implies that arbitrarily large regular cardinals exist.
\end{theo}
\begin{proof} Let $\kappa$ be a cardinal and use AMC to find a family $(p_i: B_i \to \kappa \, | \, i \in I)$ of surjections onto $\kappa$ such that for any surjection onto $\kappa$ we find one in this family which factors through it. Put $A = I + \{ 0, 1 \}$, $B_0 = \emptyset$, $B_1 = \{ 0 \}$ and let
\[ f: \sum_{a \in A} B_a \to A \]
be the obvious projection. In addition, write $W = W(f)$ for the W-type associated to $f$. 

By transfinite induction on $W$, we can now define a map $m: W \to Ord$ as follows:
\begin{eqnarray*}
m({\rm sup}_i(t)) & = & {\rm sup}\{ m(tb) \, : \, b \in B_i \}, \\
m({\rm sup}_0(t)) & = & 0, \\
m({\rm sup}_1(t)) & = & m(t(0)) + 1.
\end{eqnarray*}
Observe that:
\begin{enumerate}
\item 0 lies in the image on $m$.
\item The image of $m$ is closed under taking successor ordinals.
\item The image of $m$ is closed under suprema of cardinality $\kappa$: for if $\beta = {\rm sup}(\alpha_\lambda)$ and $\alpha_\lambda \in m(W)$ for every $\lambda \in \kappa$, then
\[ (\forall \lambda \in \kappa) \, (\exists w \in W) \, m(w) = \alpha_\lambda. \]
By choice of  $(f_i: B_i \to \kappa \, | \, i \in I)$ there now is an $i \in I$ and a function $t: B_i \to W$ such that
\[ (\forall b \in B_i) \, m(tb) = \alpha_{p_i(b)}. \]
Hence $\beta = m({\rm sup}_i(t))$.
\end{enumerate}
So if $\alpha = {\rm sup}(m(W))$, then $\alpha$ is a limit ordinal of cofinality strictly greater than $\kappa$. But since the cofinality of a limit ordinal is always a regular cardinal, we have found a regular cardinal whose size is strictly greater than $\kappa$.
\end{proof}

\begin{coro}{AMCunprovableinZF} AMC is unprovable in
{\bf ZF}.
\end{coro}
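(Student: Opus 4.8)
The plan is to deduce the corollary from Theorem~\ref{AMCimpliesclassregcard} by a purely metamathematical argument. Suppose, for contradiction, that {\bf ZF} proves AMC. Then by Theorem~\ref{AMCimpliesclassregcard} the theory {\bf ZF} would prove the statement that for every cardinal $\kappa$ there is a regular cardinal strictly larger than $\kappa$; in other words, {\bf ZF} would prove that arbitrarily large regular cardinals exist. It therefore suffices to exhibit a model of {\bf ZF} in which this conclusion fails.

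This is exactly where the absence of the axiom of choice becomes essential: in {\bf ZFC} every successor cardinal is regular, so arbitrarily large regular cardinals are trivially available, but without choice the supply of regular cardinals can be drastically curtailed. The key ingredient I would invoke is Gitik's theorem that it is consistent with {\bf ZF} that every uncountable cardinal is singular (see \cite{gitik80}). In any such model the only infinite regular cardinal is $\omega$, so there is no regular cardinal above $\omega$ at all, and \emph{a fortiori} arbitrarily large regular cardinals do not exist. Combining this with the previous paragraph yields a contradiction, whence {\bf ZF} does not prove AMC.

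The main caveat, rather than a genuine obstacle, is that Gitik's model is not constructed outright in {\bf ZF}: its consistency is established relative to the consistency of {\bf ZFC} together with a suitable supply of large cardinals. Consequently the unprovability we obtain is an unprovability modulo the consistency of these large cardinal hypotheses, which is the strongest form in which one can reasonably hope to phrase such a statement. It should be stressed that we are not claiming AMC to be false in any model of independent interest, but only that it fails to be a theorem of {\bf ZF}; the positive direction, that AMC holds whenever choice does and hence in $\Sets$, has already been noted in the introduction.
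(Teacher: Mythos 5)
Your argument is correct and is exactly the paper's proof: combine \reftheo{AMCimpliesclassregcard} with Gitik's theorem that {\bf ZF} is consistent with all uncountable cardinals being singular. Your caveat about large cardinal hypotheses also matches the paper's own remark following the corollary.
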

\begin{proof}
This follows from \reftheo{AMCimpliesclassregcard} and a celebrated result of Gitik \cite{gitik80}, saying {\bf ZF} is consistent with the statement that all uncountable cardinals are singular.
\end{proof}

\begin{rema}{ongitiksproof}
To construct his model of {\bf ZF} in which all uncountable cardinals are singular, Gitik uses the existence of a proper class of strongly compact cardinals. It is known that large cardinal assumptions are necessary for that result; whether they are also necessary for refuting AMC is an open problem.
\end{rema}

\begin{coro}{AMCfreealginZF}
In {\bf ZF}, AMC implies that every algebraic theory has free algebras.
\end{coro}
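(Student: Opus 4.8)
The plan is to derive this from \reftheo{AMCimpliesclassregcard}: the only obstruction to building free algebras in {\bf ZF} is set-theoretic, and it disappears once arbitrarily large regular cardinals are available. Fix an algebraic theory $T$, presented by a set $\Omega$ of operation symbols with set-sized arities $(\alpha_\omega)_{\omega \in \Omega}$ together with a set $\mathcal{E}$ of equations, and fix a set $X$ of generators. Since $\Omega$ is a set, $\mu := \sup_{\omega \in \Omega} |\alpha_\omega|$ is a cardinal, so by \reftheo{AMCimpliesclassregcard} we may choose an infinite regular cardinal $\kappa > \mu$. This $\kappa$ is the one external ingredient the argument needs; everything else will be carried out in plain {\bf ZF}.

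First I would build the free term algebra $\mathrm{Tm}(X)$, namely the collection of well-founded trees whose leaves are labelled by elements of $X$ and whose internal nodes are labelled by an operation $\omega \in \Omega$ and have children indexed by $\alpha_\omega$; this is exactly the W-type associated to the evident projection $\coprod_{\omega \in \Omega} \alpha_\omega \to \Omega + X$, and it carries the $\Omega$-operations by tree-formation. The heart of the proof is the claim that $\mathrm{Tm}(X)$ is a \emph{set} rather than a proper class. Every node of such a tree has fewer than $\kappa$ children, because $|\alpha_\omega| \le \mu < \kappa$; since $\kappa$ is regular, a straightforward induction over the tree shows that its rank is $< \kappa$ (a supremum of fewer than $\kappa$ ordinals each below $\kappa$ stays below $\kappa$). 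Hence all these trees lie in a single $V_\delta$ and form a set by Separation. This is precisely the step at which regularity is indispensable and at which {\bf ZF} alone fails: in a model where all uncountable cardinals are singular (Gitik's model, cf.~\cite{gitik80}), the $\aleph_0$-branching well-founded trees have unbounded rank, so a theory with an operation of infinite arity need not possess a free term algebra at all.

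With $\mathrm{Tm}(X)$ in hand as a set, the remaining steps are routine and choice-free. I would form $F_T(X) := \mathrm{Tm}(X)/{\sim}$, where $\sim$ is the least congruence on $\mathrm{Tm}(X)$ containing all substitution instances of the equations in $\mathcal{E}$; this $\sim$ is the least fixed point of the obvious monotone operator on $\mathcal{P}(\mathrm{Tm}(X) \times \mathrm{Tm}(X))$ (closing under reflexivity, symmetry, transitivity, the equation instances, and compatibility with each $\omega$), and it exists as a set by Knaster--Tarski because $\mathrm{Tm}(X)$ is a set. For the universal property, given a $T$-algebra $A$ and a map $h: X \to A$, well-founded recursion on $\mathrm{Tm}(X)$ (the initial-algebra property of the W-type) yields a unique $\Omega$-homomorphism $e: \mathrm{Tm}(X) \to A$ extending $h$; since $A$ satisfies $\mathcal{E}$, the kernel pair of $e$ is a congruence containing every equation instance and hence contains $\sim$, so $e$ descends to the desired unique $T$-homomorphism $\overline{e}: F_T(X) \to A$. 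Thus the whole argument reduces to the single place where the hypothesis bites, namely the boundedness of the ranks of terms, and that is exactly what the regular cardinal supplied by AMC delivers. (Alternatively, one may simply invoke the known fact that arbitrarily large regular cardinals imply the existence of free algebras for every algebraic theory and combine it with \reftheo{AMCimpliesclassregcard}.)
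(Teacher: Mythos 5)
Your proposal is correct and follows essentially the same route as the paper, which simply combines \reftheo{AMCimpliesclassregcard} with the known fact (Proposition 2 on p.~151 of \cite{blass83}) that arbitrarily large regular cardinals yield free algebras for every algebraic theory. The only difference is that you reconstruct Blass's argument in full (bounding the ranks of the well-founded term trees by a regular cardinal above the arities, then quotienting by the least congruence), whereas the paper just cites it; your closing parenthetical is exactly the paper's proof.
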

\begin{proof}
Follows from \reftheo{AMCimpliesclassregcard} above and Proposition 2 on page 151 of \cite{blass83}.
\end{proof}

That every algebraic theory has free algebras is known not to be provable in {\bf ZF} proper (see \cite{blass83}; of course, it is provable in {\bf ZFC}). So this is an example of an interesting mathematical statement which can be proved using AMC, but is not provable in {\bf ZF}. It would be interesting to see more statements of this kind. 

\section{Examples of predicative toposes}

The previous section concludes our discussion of the Axiom of Multiple Choice. We will now turn to the theory of predicative toposes. As we explained in the introduction, we will define these as follows: 

\begin{defi}{predtopos}
A \emph{(strong) predicative topos} is a $\Pi W$-pretopos satisfying the (strong) Axiom of Multiple Choice.
\end{defi}

In this section we will mainly collect some examples of predicative toposes. Of course, the most interesting are the ones that are not toposes.

\begin{theo}{setoidsexample}
The category of setoids in MLTT is a strong predicative topos.
\end{theo}
\begin{proof}
In \cite[Section 7]{moerdijkpalmgren00} it is shown that the setoids form a $\Pi W$-pretopos. Since in this category every map is covered by a choice map (see \cite[Lemma 12.3]{moerdijkpalmgren02}), it is a strong predicative topos by \refprop{choicemapsandSAMC}.
\end{proof}

As we said in Section 2, most examples are built using the theory of exact completions. 

\begin{theo}{predtopasexactlexcompl}
The ex/lex-completion of a $\Pi W$-pretopos is a strong predicative topos.
\end{theo}
\begin{proof}
This follows from Theorem 4.7 in \cite{berg09} in combination with \refprop{recogexlexcompl} and \refprop{enoughprojandSAMC} above.
\end{proof}

\begin{rema}{exactcomplclosproperty}
The previous theorem implies in particular that ex/lex-completion is a closure property of predicative toposes. In this respect they differ from toposes: an ex/lex-completion of a topos is a topos only if it has a generic proof (see \cite{menni03}). Examples of toposes without a generic proof can be found in \cite{menni03, menni07}; their exact completions are then examples of predicative toposes that are not toposes.
\end{rema}

For ex/reg-completions we have the following very useful result:
\begin{theo}{exregcomplforpredtoposes}
If \ct{E} is a locally cartesian closed, lextensive and regular category which has all W-types and satisfies (strong) AMC, then its ex/reg-completion is a (strong) predicative topos.
\end{theo}
\begin{proof}
For ordinary AMC, this result was proved in the context of algebraic set theory in \cite{bergmoerdijk08, bergmoerdijk10}. So it remains to show that SAMC is preserved by ex/reg-completion. We could prove this directly, but it can also be shown more elegantly using the equivalence of SAMC with RP.

If $f$ is a map in the ex/reg-completion, it is covered by one of the form ${\bf y}g$, where ${\bf y}$ is the embedding of \ct{E} in $\ct{E}_{ex/reg}$. Let \smallmap{S} be a class of small maps containing $g$ in \ct{E}. Then \smallmap{S} determines a class of small maps $\overline{\smallmap{S}}$ in $\ct{E}_{ex/reg}$, whose elements are precisely those morphisms covered by ones of the form ${\bf y}h$ with $h$ belonging to $\smallmap{S}$ (see \cite[Lemma 5.7]{bergmoerdijk08}). So $f \in \overline{\smallmap{S}}$ and $f$ belongs to a class of small maps.
\end{proof}

\begin{theo}{realforpredtoposes}
(Strong) predicative toposes are closed under internal realizability.
\end{theo}
\begin{proof}
For ordinary predicative toposes this was shown in the context of algebraic set theory in \cite{bergmoerdijk09, bergmoerdijk10}. So it remains to show that SAMC in preserved by realizability. We sketch an argument, relying on the theory developed in \cite{bergmoerdijk09} and the equivalence of SAMC with RP.

Let $f$ be a map in the realizability category of \ct{E}. Since this realizability category is the ex/reg-completion of the category of assemblies, $f$ will be covered by a map $g$ between assemblies. Choose a class of small maps $\smallmap{S}$ in \ct{E} such that $g$ becomes a display map between assemblies, and let $\overline{\smallmap{S}}$ be the class of maps in the realizability category that are covered by the display maps. Then $\overline{\smallmap{S}}$ is a class of small maps containing $f$.
\end{proof}

Recall that an object $X$ is called \emph{subcountable} if it is a quotient of a subobject of $\NN$.

\begin{theo}{subcineffaspredtop}
The subcountable objects in the effective topos form a strong predicative topos.
\end{theo}
\begin{proof}
Recall that the subcountable objects coincide with the discrete ones and that the discrete objects that are also $\lnot\lnot$-separated are called the modest sets (see \cite{hylandrobinsonrosolini90} and \cite[Section 3.2.6]{vanoosten08}). Since every discrete object is covered by a $\lnot\lnot$-separated subobject of $\NN$ (due to Shanin's Principle) and $\lnot\lnot$-separated objects are closed under subobjects, the discrete objects are the ex/reg-completion of the modest sets. So if suffices to prove that the modest sets satisfy all the hypotheses of \reftheo{exregcomplforpredtoposes}; for that, see \cite{bauer00}, with the validity of SAMC following from \refprop{enoughprojandSAMC} and the results on projective modest sets in \cite{bauer00}.
\end{proof}

Clearly, the subcountable objects cannot form a topos: the subcountability of ${\rm Pow}(\NN)$ leads to a contradiction by Cantor's diagonal argument.

\begin{theo}{topexlexaspredtop}
The ex/lex-completions of the category of topological spaces and the category of $T_0$-spaces are strong predicative toposes.
\end{theo}
\begin{proof}
To simplify the proof we first make a number of definitions (see \cite{bauerbirkedalscott04}): first of all, let $\ALat$ be the category of algebraic lattices. An \emph{assembly} over $\ALat$ is a triple $(X, \sigma, \alpha)$ where $X$ is a set, $\sigma$ is an algebraic lattice and $\alpha$ is a function which assigns to every $x \in X$ an inhabited subset of $\sigma$. A morphism $f: (X, \sigma, \alpha) \to (Y, \tau, \beta)$ is a function $f: X \to Y$ for which there is a morphism $r: \sigma \to \tau$ in $\ALat$ such that
\[ (\forall x \in X) \, (\forall n \in \alpha(x)) \, rn \in \beta(fx) \]
(we will say that $r$ \emph{tracks} $f$). The resulting 
category will be denoted by $\Asm[\ALat]$. An assembly $(X, \sigma, \alpha)$ will be called \emph{modest}, if $\alpha$ maps distinct elements to disjoint subsets of $\sigma$. We will write $\Mod[\ALat]$ for the full subcategory of $\Asm[\ALat]$ whose objects are the modest assemblies over $\ALat$ (this category is equivalent to the category of equilogical spaces). 

It is easy to see that in both categories the projective objects are those assemblies $(X, \sigma, \alpha)$ for which $\alpha(x)$ is always a singleton, the projectives are closed under finite limits and that every object is covered by such a projective. Since the full subcategories on the projectives are equivalent to the category of topological spaces and $T_0$-spaces respectively (see \cite{birkedaletal98}), it follows from \refcoro{exregexlexcoincidence} that the ex/lex-completion of the category of topological spaces is equivalent to the ex/reg-completion of $\Asm[\ALat]$, while the ex/lex-completion of the category of $T_0$-spaces is equivalent to the ex/reg-completion of $\Mod[\ALat]$; so it suffices to prove that both $\Asm[\ALat]$ and $\Mod[\ALat]$ satisfy the hypotheses of \reftheo{exregcomplforpredtoposes}.

Using that $\ALat$ is cartesian closed and has finite disjoint sums, it is easy to see that both $\Asm[\ALat]$ and $\Mod[\ALat]$ are categories which are locally cartesian closed, lextensive and regular. In addition, they satisfy SAMC because of \refprop{enoughprojandSAMC}.
So it remains to show that both categories have W-types. We will just outline the construction, leaving verifications to the reader. 

The key observation is that $\ALat$ is a category of domains in which one can prove that functors of the form
\[ F(X) = \rho \times (\sigma \to X) \]
have initial algebras. So if $f: (B, \sigma, \beta) \to (A, \rho, \alpha)$ is a morphism between assemblies, let $\mu$ be an initial algebra for $F$ and let
\[ s: \rho \times (\sigma \to \mu) \to \mu \]
be the algebra map. Now write $W = W(f)$ for the W-type associated to $f$ in $\Sets$, and define by transfinite recursion:
\[ \delta: W(f) \to \mu: {\rm sup}_a(t) \mapsto \{ \, s(x, y) \, : \, x \in \alpha(a), y: \sigma \to \mu \mbox{ tracks } t  \, \}. \]
If $V = \{ w \in W \, : \, \delta(w) \mbox{ is inhabited } \}$, then $(V, \mu, \delta)$ is the W-type associated to $f$. Moreover, if $(B, \sigma, \beta)$ and $(A, \rho, \alpha)$ are modest, then so is $(V, \mu, \delta)$.
\end{proof}

\begin{rema}{nottoposes}
Both exact completions were already known to be locally cartesian closed pretoposes (see \cite{birkedaletal98}). It also follows from known results that neither of the two predicative toposes can be a topos: for a proof that the ex/lex-completion of the category of topological spaces is not a topos, see \cite{lietzstreicher02}. The ex/lex-completion of the category of $T_0$-spaces cannot be a topos, because it is not well-powered: in fact, the category $\Mod[\ALat]$ already fails to be well-powered (see \cite{bauerbirkedalscott04}), so the same applies to its ex/reg-completion.
\end{rema}

\section{Sheaves for predicative toposes}

As we explained in the introduction, a crucial result in formal topology says that set-presented formal spaces can be constructed using inductive definitions. As predicative toposes should be an environment in which one can do formal topology, it will be important to show that one can perform this construction internally to a predicative topos. But we can do more: inductively generated formal spaces are a special case of Grothendieck sites constructed from general sites. And also the more general construction can be formalised in a predicative topos, as we will now explain.

We have decided to follow \cite{moerdijkpalmgren02} in the formalisation of the notion of a site. So here the basic categorical structure of an internal site consists of an internal category \ct{C} together with a collection of covering families Cov$(C)$ for every object $C$ of \ct{C}. This is formalised by a commutative square of the form
\begin{equation} \label{site}
\begin{gathered}
\xymatrix{ *+\txt{ \underline{Cov} } \ar[r]^m \ar[d]_{\phi} & C_1 \ar[d]^{\txt{cod}} \\
*+\txt{Cov} \ar[r]_n & C_0, }
\end{gathered}
\end{equation}
where $C_1$ is the object of arrows and $C_0$ the object of objects of the internal category \ct{C}, while cod is the codomain map. So any $U \in$ Cov$(C)$ gives rise to an indexing set \underline{Cov}$_U$, indexing a family of arrows all with codomain $C$. Such a covering family $U$ will therefore typically be denoted by $( \alpha_i: C_i \to C \, | \, i \in I )$, where $I$ is the indexing set. (Here it is to be understood that $\alpha_i$ can be equal to $\alpha_j$ for different $i$ and $j$.)  

For a \emph{site}, the following axiom should hold in the internal logic:
\begin{description}
\item[(C)] For any covering family $( \alpha_i: C_i \to C \, | \, i \in I )$ of $C$ and any arrow $f: D \to C$, there exists a covering family $( \beta_j: D_j \to D \, | \, j \in J )$ such that every composite $f\beta_j$ factors through some $\alpha_i$.
\end{description}

Sites are the right context to formulate the notion of a \emph{sheaf}. An (internal) \emph{presheaf} \psh{P} on an internal category \ct{C} consists of an indexed family $\{ \psh{P}(C) \}_{C \in C_0}$ (given by a map $\psh{P} \to C_0$), together with an action by $C_1$. The idea is that any $p \in \psh{P}(C)$ is acted on by a morphism $f: D \to C$ in $C_1$ to determine an element $p \cdot f \in \psh{P}(D)$, where this action is subject to the following two equations (whenever they make sense):
\begin{eqnarray*}
p \cdot \id & = & p, \\
(p \cdot f) \cdot g & = & p \cdot (fg).
\end{eqnarray*}
Now assume \ct{C} is the underlying category of an internal site $(\ct{C}, {\rm Cov})$, and let \psh{P} be a presheaf on \ct{C}. A \emph{compatible family} on an object $C \in C_0$ consists of a covering family $( \alpha_i: C_i \to C \, | \, i \in I )$ of $C$, together with for every $i \in I$ an element $p_i \in \psh{P}(C_i)$, in such a way that for any pair $i, j \in I$ and any pair of maps $f, g \in C_1$ for which we have $\alpha_if = \alpha_jg$ the equation $p_i \cdot f = p_j \cdot g$ is satisfied. An element $p \in \psh{P}(C)$ will be called the \emph{amalgamation} of this compatible family, in case $p \cdot \alpha_i = p_i$ for every $i \in I$. The presheaf \psh{P} will be called a \emph{sheaf} when every compatible family has a \emph{unique} amalgamation.

The next definition formulates different notions of a site.
\begin{defi}{sites}
Let $(\ct{C}, {\rm Cov})$ be a site.
\begin{enumerate}
\item It will be called a \emph{Grothendieck site} when it satisfies the following closure conditions:
\begin{description}
\item[(M)] For any object $C$, there is a covering family $U \in \mbox{Cov}(C)$ such that $(\id_C: C \to C) \in U$.
\item[(L)] Whenever there are a covering family $( \alpha_i: C_i \to C \, | \, i \in I )$ of $C$ and families $(\beta_{ij}: \xymatrix{ C_{ij} \ar[r] & C_{i}} \, | \, j \in I_i)$ covering $C_i$ for every $i \in I$, there is a family $(\gamma_l: D_l \to C \, | \, l \in L)$ such that every $\gamma_l$ factors through some $\alpha_i\beta_{ij}$.
\end{description}
\item It will be called a \emph{(strong) collection site}, if (\ref{site}) is a (strong) collection square.
\end{enumerate}
\end{defi}

As Johnstone explains in \cite{johnstone02a}, in topos theory (M) and (L) are closure conditions that ``might just as well be there'', but are not essential to the notion of a site. This is backed up by the result that in a topos there is for every internal site a Grothendieck site leading to an equivalent category of sheaves (if that happens, we say that the sites are \emph{equivalent}). The idea behind the construction of the Grothendieck site is to generate it inductively by closing off under the conditions (M) and (L). In a predicative topos, we do have W-types and the inductive definition does not present any problems; we do have some choice issues, however, for which we do seem to need that we start with a collection site. It is fortunate, then, that we can show (compare Lemma 8.3 in \cite{moerdijkpalmgren02}):

\begin{lemm}{equivcollsite} Let \ct{E} be a (strong) predicative topos. Then for every internal site \ct{C} in \ct{E} there exists an equivalent (strong) collection site.
\end{lemm}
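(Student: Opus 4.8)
The plan is to use the (Strong) Axiom of Multiple Choice, which holds because \ct{E} is a (strong) predicative topos, to enlarge the collection of covering families of the given site until the defining square \eqref{site} becomes a (strong) collection square, all the while keeping the category of sheaves unchanged. Concretely, I would apply (S)AMC to the left-hand map $\phi \colon \underline{\mathrm{Cov}} \to \mathrm{Cov}$ of \eqref{site}, obtaining a square
\diag{ D \ar[r]^{q} \ar[d]_g & \underline{\mathrm{Cov}} \ar[d]^{\phi} \\
\mathrm{Cov}' \ar[r]_p & \mathrm{Cov} }
that is both covering and (strong) collection, with $p$ a cover. This yields a new site $(\ct{C}, \mathrm{Cov}')$ with the same underlying internal category \ct{C}, but with object of covering families $\mathrm{Cov}'$, indexing object $D$, indexing map $g$, arrow map $m \circ q \colon D \to C_1$, and codomain-of-family map $n \circ p \colon \mathrm{Cov}' \to C_0$. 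Commutativity of the new square is immediate from commutativity of \eqref{site} together with that of the AMC square.

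The first thing to check is that the new square is a (strong) collection square, and here the work is light. Since the AMC square is (strong) collection and its bottom and top maps sit, via $p$ and $q$, underneath $n$ and $m$ respectively, the collection property demanded of the new square follows directly: given an element $c' \in \mathrm{Cov}'$ and a cover of its fibre under $g$, the witness $c''$ and the reindexing map $h$ produced by the collection property of the AMC square automatically satisfy $n(p(c'')) = n(p(c'))$ and, after composition with $m$, agree with the new arrow map over $C_1$; in the SAMC case $h$ may moreover be taken to be a cover, giving a strong collection square. Thus $(\ct{C}, \mathrm{Cov}')$ is a (strong) collection site as soon as it is shown to be a site at all.

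The substantial point is the equivalence of the two sites, and this is where I expect the real work to lie. The crucial ingredient is the \emph{covering} half of the AMC square: restricting the cover $D \to \underline{\mathrm{Cov}} \times_{\mathrm{Cov}} \mathrm{Cov}'$ to fibres shows that for each $c' \in \mathrm{Cov}'$ the induced map from the new index set to the old index set of $p(c')$ is a cover that respects the arrows. Hence every new covering family is obtained from an old one simply by reindexing along a cover of its index set, and, since $p$ is a cover, every old family arises in this way. Because reindexing along a surjection of index sets leaves the generated sieve unchanged, I would then verify that (a) axiom (C) transfers between $\mathrm{Cov}$ and $\mathrm{Cov}'$, so that $(\ct{C},\mathrm{Cov}')$ is a genuine site, and (b) a presheaf is a sheaf for $\mathrm{Cov}$ if and only if it is a sheaf for $\mathrm{Cov}'$. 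For (b) the argument, though routine, must be carried out internally: a compatible family for a reindexed covering family factors through the original index set, because compatibility forces the assigned element to depend only on the image of an index, and amalgamations then correspond bijectively. The main obstacle will be executing this compatible-family bookkeeping and the transfer of (C) entirely within the internal logic of \ct{E} while respecting the orientation conventions for covering and collection squares; notably, none of it requires impredicative reasoning, only the covering-square data supplied by (S)AMC.
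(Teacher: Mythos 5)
Your proposal is correct and follows essentially the same route as the paper: apply (S)AMC to the map $\phi$, paste the resulting covering and (strong) collection square onto the left of \eqref{site}, and take the outer rectangle as the new site. The verifications you sketch (that the outer rectangle inherits the collection property, that (C) transfers, and that reindexing along fibrewise covers preserves the sheaf condition) are exactly the details the paper dismisses as ``easy to see.''
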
 
\begin{proof}
If we have an internal site in \ct{E} represented by
\begin{displaymath}
\begin{gathered}
\xymatrix{ *+\txt{ \underline{Cov} } \ar[r]^m \ar[d]_{\phi} & C_1 \ar[d]^{\txt{cod}} \\
*+\txt{Cov} \ar[r]_n & C_0 }
\end{gathered}
\end{displaymath}
and \ct{E} satisfies (strong) AMC, we can apply it to the map $\phi$ to obtain
\begin{displaymath}
\begin{gathered} 
\xymatrix{ E \ar@{->>}[r] \ar[d] & *+\txt{ \underline{Cov} } \ar[r]^m \ar[d]_{\phi} & C_1 \ar[d]^{\txt{cod}} \\ D \ar@{->>}[r] &
*+\txt{Cov} \ar[r]_n & C_0, }
\end{gathered}
\end{displaymath}
where the left square is both covering and a (strong) collection square. The idea is to replace the original site by the one represented by the outer rectangle. It is easy to see that this defines an equivalent site which is also (strong) collection.
\end{proof}

\begin{prop}{eqGrsites}
Let \ct{E} be a (strong) predicative topos. For every (strong) collection site $(\ct{C}, {\rm Cov})$ in \ct{E}  there exists an equivalent Grothendieck (strong) collection site $(\ct{C}, {\rm COV})$. 
\end{prop}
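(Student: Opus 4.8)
The plan is to take $(\ct C, \mathrm{COV})$ to be the Grothendieck topology \emph{generated} by the given basis $\mathrm{Cov}$, realised as an inductively generated collection of covering families. Since $\ct E$ is a $\Pi W$-pretopos it has W-types, so (as the paper already remarks) this inductive definition can be carried out inside $\ct E$. Concretely I would define $\underline{\mathrm{COV}} \to \mathrm{COV} \to C_0$ as the smallest family of covers closed under three rules: \emph{reflexivity}, adjoining for each object $C$ the singleton family $(\id_C)$; \emph{basis}, adjoining every $U \in \mathrm{Cov}(C)$; and \emph{transitivity}, adjoining for each already-generated cover $(\alpha_i : C_i \to C \mid i \in I)$ together with a family of already-generated covers $(\beta_{ij} : C_{ij}\to C_i \mid j \in J_i)$ of the $C_i$ the composite $(\alpha_i\beta_{ij} \mid (i,j)\in \sum_i J_i)$. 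The index set of a cover, together with the two legs of the square \eqref{site} for $\mathrm{COV}$ (the arrow-map to $C_1$ and the object-map to $C_0$), is computed by the same recursion.

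With this definition the two Grothendieck closure conditions are essentially immediate: (M) is witnessed by the reflexivity covers, and (L) is witnessed directly by the transitivity covers, since the composite family $(\alpha_i\beta_{ij})$ factors through its own legs. Equivalence with the original site I would establish by showing that a presheaf \psh{P} is a $\mathrm{Cov}$-sheaf iff it is a $\mathrm{COV}$-sheaf, by induction on the generation: a compatible family on a reflexivity cover is a single section, which amalgamates trivially, while a compatible family on a transitivity cover amalgamates in two stages (first along the inner covers $(\beta_{ij})_j$, then along $(\alpha_i)_i$). Crucially this direction needs no choice, because at each stage the amalgamation is \emph{unique}, so the intermediate sections $q_i$ form a genuine function of $i$ rather than a choice; the converse inclusion of sheaves is trivial, since every basis cover is a generated cover. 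Hence the two sites have literally the same sheaves, so they are equivalent.

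It remains to verify that $(\ct C, \mathrm{COV})$ is a site at all, i.e. satisfies axiom (C), and that it is a \emph{(strong) collection} site, i.e. that \eqref{site} is a (strong) collection square. Both I would prove by induction on the generation of covers. For (C): the reflexivity case is trivial (everything factors through $\id_C$) and the basis case is axiom (C) for $(\ct C, \mathrm{Cov})$, while at a transitivity node one pulls the given map back through the outer cover and refines each resulting piece using the inductive hypothesis for the inner covers. For the (strong) collection square property: the basis case is exactly the hypothesis that $(\ct C, \mathrm{Cov})$ is a (strong) collection site; at a transitivity node one restricts a given cover of $\sum_i J_i$ over each summand, applies the inductive hypothesis to the inner covers $(\beta_{ij})_j$ fibrewise, and reassembles (in the strong case a coproduct of covers over $I$ is again a cover in the pretopos $\ct E$); and the reflexivity case is handled in the internal logic, where a cover $e : E \epi 1$ of the one-point index set is inhabited, so its inhabitant is a section of $e$ and $\id_1$ is a cover over $C_1$ factoring through $e$.

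The step I expect to be the real obstacle is the bookkeeping that makes these last two inductions legitimate. At each transitivity node one has, for every index $i$, an \emph{existential} assertion ``there is a refining inner cover'', and one must convert this into a \emph{single} covering family indexed uniformly over $i$. This is precisely a collection (``$\forall\exists\to\exists\forall$'') step, and it is exactly here that the assumption of starting from a (strong) collection site — and, in the ambient (strong) predicative topos, the full strength of (strong) AMC — is indispensable; indeed this is the reason, flagged earlier in the section, that a bare site will not do. In the strong case one can equivalently license these bundlings through the reflection principle, using the equivalence of SAMC with RP from \reftheo{SAMCRPequivalent}, which supplies the representing families of covers along which the refinement is performed. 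Once this is in place one concludes that $\mathrm{COV}$ is a Grothendieck (strong) collection site equivalent to $\mathrm{Cov}$; this is the repeated, inductive analogue of the one-step passage to a collection site carried out in Lemma~\ref{equivcollsite}.
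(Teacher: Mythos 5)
Your overall strategy is the one the paper follows: generate $\mathrm{COV}$ inductively using W-types, verify (M), (L), (C) and the (strong) collection property by induction on the generation of a cover, use the collection-site hypothesis to bundle the fibrewise existentials into a single refining family, and prove sheaf-equivalence by induction (no choice needed because amalgamations are unique). But there is one genuine gap, and it sits exactly where you formalise the inductive definition. Your transitivity rule takes as input an \emph{already-generated} cover $(\alpha_i \mid i \in I)\in\mathrm{COV}(C)$ and then a family of generated covers indexed by $I=\underline{\mathrm{COV}}_V$. Since $\underline{\mathrm{COV}}_V$ is itself ``computed by the same recursion,'' the branching type of this constructor depends, via a recursively defined function, on a previously constructed element of the very object being defined. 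That is an inductive-\emph{recursive} definition, not an initial algebra for a dependent polynomial functor, and W-types in a $\Pi W$-pretopos only deliver the latter: a dependent polynomial functor must have its sum and product indexed by \emph{fixed, pre-existing} maps. In a topos you could bound the index sets and carve out the well-formed trees, but predicatively there is no such bound available. The paper's construction avoids this by using only two rules, the second of which composes a \emph{basis} cover $U\in\mathrm{Cov}(C)$ (branching over the fixed object $\underline{\mathrm{Cov}}_U$) with generated covers of its domains; the functor is then genuinely dependent polynomial, and $\underline{\mathrm{COV}}$ and $M$ are defined by recursion \emph{after} the initial algebra exists. The price is that your claim ``(L) is witnessed directly by the transitivity covers'' evaporates: with the two-rule version, (L) — like (C) and the collection property — must be proved by induction, using the collection-site hypothesis at each step. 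Your inductive arguments for (C) and for the collection square adapt to that setting essentially unchanged, so the repair is local, but as written the construction of $\mathrm{COV}$ does not go through from W-types alone.

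Two smaller points. First, within this proposition neither AMC nor RP is needed: the choice content was already spent in the preceding lemma to replace an arbitrary site by a collection site, and here only the collection-square property of $(\ct{C},\mathrm{Cov})$ together with the existence of initial algebras for dependent polynomial functors is used; invoking ``the full strength of (strong) AMC'' or the equivalence with RP inside this proof is a misattribution. Second, your separate ``basis'' rule is harmless but redundant once the two-rule version is adopted, since every $U\in\mathrm{Cov}(C)$ arises from the composition rule by taking the inner covers to be identities — which is exactly how the paper gets the easy direction of the sheaf-equivalence.
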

\begin{proof} The idea is to build COV as the object inductively generated by the following inference rules:
\begin{displaymath}
\begin{array}{ccc}
\infer{(\id_C: C \to C) \in {\rm COV}(C)}{} & &
\infer{(\alpha_i \circ \beta \, | \, \beta \in V_i) \in {\rm COV}(C).}{( \alpha_i: C_i \to C \, | \, i \in I ) \in {\rm Cov}(C) & V_i \in {\rm COV}(C_i)}
\end{array}
\end{displaymath}
To show that one can achieve this, define an endofunctor 
$F: \ct{E}/C_0 \to \ct{E}/C_0$ as follows ($C \in C_0$):
\[ (FX)_C = 1 + \sum_{U \in \mbox{Cov}(C)} \prod_{i \in \underline{\mbox{Cov}}_U} X_{\mbox{dom}(m(i))}. \]
This is an example of what Gambino and Hyland in \cite{gambinohyland04} call a \emph{dependent polynomial functor}. They show that, in the presence of W-types, these have initial algebras, so we may define the new object of covering families COV as the initial algebra for $F$; as it is a fixed point, it will satisfy ($C \in C_0$):
\[ \mbox{COV}(C) = 1 + \sum_{U \in \mbox{Cov}(C)} \prod_{i \in \underline{\mbox{Cov}}_U} \mbox{COV}(\mbox{dom}(m(i))). \]
In addition, initial algebras allow definition by recursion on their elements and this we will use to define the new object \underline{COV} over COV and the new arrow $M$, thereby completing the definition of the site $(\ct{C}, {\rm COV})$:
\begin{equation*} 
\begin{gathered}
\xymatrix{ *+\txt{ \underline{COV} } \ar[r]^M \ar[d] & C_1 \ar[d]^{\txt{cod}} \\
*+\txt{COV} \ar[r] & C_0. }
\end{gathered}
\end{equation*}
Elements $V$ in COV (over $C \in C_0$) are either * (the unique element of 1) or of the form sup$_U(t)$, where $U \in \mbox{Cov}(C)$ and $t: \underline{\mbox{Cov}}_U \to \mbox{COV}$. So we may define \underline{COV}$_V$ by recursion on $V$ as:
\begin{eqnarray*}
\mbox{\underline{COV}}_* & = & 1, \\
\mbox{\underline{COV}}_{\mbox{sup}_U(t)} & = & \sum_{i \in \underline{\mbox{Cov}}_U} \mbox{\underline{COV}}_{t(i).}
\end{eqnarray*}
The definition of $M$ runs as follows. The unique element of \underline{COV}$_*$ is sent to $1_C: C \to C$. An element in \underline{COV}$_{\mbox{sup}_U(t)}$ is of the form $(i, k)$, with $i \in \underline{\mbox{Cov}}_U$ and $k \in \mbox{\underline{COV}}_{t(i)}$. Such an element $(i, k)$ is sent to $m(i) \circ M(k)$. 

We will now outline why the constructed objects have the desired properties. First, we need to show that $(\ct{C}, {\rm COV})$ is a Grothendieck site. It is obvious that COV now satisfies the rule (M). By induction on the construction of the covering family $V \in {\rm COV}(C)$, one can show that the covering families in COV are closed under (C) and (L), using that $(\ct{C}, {\rm Cov})$ is a collection site. We will give the argument for (C), the proof of (L) being similar. If $U = (\id_C: C \to C)$, then $U$ clearly satisfies (C), so it remains to consider the case where $U$ is obtained by the second inference rule. So let $V = {\rm Cov}(C)$ be of the form $( \alpha_i: C_i \to C \, | \, i \in I )$ and assume that the covering families $V_i \in {\rm COV}(C_i)$ are closed under (C). Now let $f: D \to C$ be any map in the internal category \ct{C}. I need to show that there is an $U' \in {\rm COV}(D)$ such that for any $g \in U'$, the arrow $f g$ factors through some $\alpha_i \beta$ with $\beta \in V_i$. As $(\ct{C}, {\rm Cov})$ is a site, there is a covering family $(\delta_j: D_j \to D \, | \, j \in J) \in {\rm Cov}(D)$ such that every $f \delta_j$ factors through some $\alpha_i$. Using that the $V_i$ have the property (C), this means that the following holds in the internal logic of \ct{E}:
\begin{quote}
For all $j \in J$, there is an $i \in I$, a morphism $h: D_j \to C_i$ and a covering family $V' \in {\rm COV}(D_j)$ such that: (1) $f \delta_j = \alpha_i h$, and (2) every $h \beta'$ with $\beta' \in V'$ factors through some $\beta$ with $\beta \in V_i$.
\end{quote}
Here we use that $(\ct{C}, {\rm Cov})$ is a collection site: so there is a covering family $(\gamma_k: D_k \to D \, | \, k \in K) \in {\rm Cov}(D)$ together with an map $p: K \to J$ satisfying $\delta_{pk} = \gamma_k$ for all $k \in K$, such that appropriate $i$, $h$ and $V'$ are given as a function of $k \in K$: call these $i_k$, $h_k$ and $V'_k$. One can now build the desired $U' \in {\rm COV}(D)$ by applying the second inference rule to $(\gamma_k: D_k \to D \, | \, k \in K) \in {\rm Cov}(D)$ and $V'_k \in {\rm COV}(D_k)$. Then every $g \in U'$ is of the form $\gamma_k \beta'$ with $\beta' \in V'_k$, and hence $f \gamma_k \beta' = \alpha_{i_{pk}} h_k \beta'$ factors through $\alpha_{i_{pk}} \beta$ for some $\beta \in V_{i_{pk}}$. This proves that $(\ct{C}, {\rm COV})$ is a site. As said, the argument that it satisfies (L), and is therefore a Grothendieck site, is very similar.

The proof that $(\ct{C}, {\rm COV})$ is a (strong) collection site also goes along very similar lines. We give the construction, leaving verifications to the reader. Let $V = (\gamma_j: C_j \to C \, | \, j \in J) \in {\rm COV}(C)$, and assume:
\[ \forall j \in J \, \exists x \in X \, \varphi(j, x). \]
We need to show that there exists another covering family $V' = (\delta_k: C_k \to C \, | \, k \in K)$ of $V$ such that $x$ can be given as a function of $k$. The argument proceeds by induction on the construction of $V$. The statement is trivial when $V = (\id_C: C \to C)$, so assume that $V$ has been built by the second inference rule, meaning that there are $( \alpha_i: C_i \to C \, | \, i \in I) \in {\rm Cov}(C)$ and $( \beta_{j}: C_j \to C_i \, | \, j \in J_i ) \in {\rm COV}(C_i)$ such that
\[ V = \{ \alpha_i \beta_j \, | \, i \in I, j \in J_i \}. \]
By induction hypothesis, the following statement is true in the internal logic of \ct{E}:
\begin{quote}
For all $i \in I$, there exists an element $V = (\epsilon_k: C_k \to C_i \, | \, k \in K) \in {\rm COV}(C_i)$ together with a map (cover) $p: K \to J_i$ and a function $g: K \to X$ such that for all $k \in K$: (1) $\beta_{pk} = \epsilon_{k}$ and (2) $\varphi((i, pk), gk)$.
\end{quote}
Since $(\ct{C}, {\rm Cov})$ is a (strong) collection site, there is a covering family $(\eta_l : C_l \to C \, | \, l \in L ) \in {\rm Cov}(C)$ and a map (cover) $r: L \to I$ such that $\alpha_{rl} = \eta_l$ and appropriate $V$, $p$ and $g$ are given as a function of $l \in L$. To obtain the right covering family, we apply the second inference rule to $(\eta_l : C_l \to C \, | \, l \in L )$ and the $V_l$. 

It remains to check that the sites $(\ct{C}, {\rm Cov})$ and $(\ct{C}, {\rm COV})$ are equivalent. This is comparatively easy: if \psh{P} is a sheaf with respect to Cov, one shows that it satisfies the sheaf condition for $V \in {\rm COV}(C)$ by a straightforward induction on the generation of $V$. Conversely, every $(\alpha_i : C_i \to C \, | \, i \in I) \in {\rm Cov}(C)$ also occurs as an element of COV($C$) (use the second inference rule with $V_i$ consisting solely of the arrow $\id_{C_i}$), so anything that is a sheaf with respect to COV is certainly also a sheaf with respect to Cov.
\end{proof}

To summarise:

\begin{theorem} \label{eqGrcollsite}
Let \ct{E} be a (strong) predicative topos. Then every site in \ct{E} is equivalent to a Grothendieck (strong) collection site.
\end{theorem}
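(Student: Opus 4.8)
The plan is to combine the two preceding results by transitivity. First I would note that the notion of equivalence of sites introduced above — two sites being equivalent when their associated categories of sheaves are equivalent categories — is itself an equivalence relation; in particular it is transitive, since a composite of equivalences of categories is again an equivalence. This is the only structural fact needed beyond the lemma and the proposition, and it is immediate.

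Given an arbitrary internal site $(\ct{C}, {\rm Cov})$ in a (strong) predicative topos \ct{E}, the first step is to replace it by an equivalent (strong) collection site. This is exactly the content of the preceding lemma (Lemma~\ref{equivcollsite}), whose proof applies (strong) AMC to the structure map $\phi$ of the site, producing via a covering (strong) collection square a new site on the same underlying category which is (strong) collection and has the same sheaves.

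The second step is to replace the resulting (strong) collection site by an equivalent Grothendieck (strong) collection site; this is precisely \refprop{eqGrsites}, where the Grothendieck closure ${\rm COV}$ is built as the initial algebra of a dependent polynomial functor and the collection property is transported along the inductive generation. Chaining the two equivalences — first $(\ct{C}, {\rm Cov})$ to its collection replacement, then that replacement to its Grothendieck closure — and invoking transitivity yields a Grothendieck (strong) collection site equivalent to the original, which is the assertion.

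There is essentially no obstacle here, since the theorem is a summary and all the genuine work has already been discharged in the lemma and the proposition. The only point requiring a word of care is that the two intermediate equivalences genuinely compose, i.e.\ that ``equivalent site'' is transitive; but as it is defined through equivalence of sheaf categories this is automatic. One might also remark that the underlying internal category \ct{C} is left unchanged throughout, so both reductions act only on the covering structure, which makes the composition of the two equivalences entirely transparent.
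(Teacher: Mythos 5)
Your proposal is correct and matches the paper exactly: the theorem is stated there as a summary ("To summarise:") of Lemma~\ref{equivcollsite} followed by Proposition~\ref{eqGrsites}, composed via transitivity of equivalence of sites, which is precisely your argument.
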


This we can use to establish our final closure property of predicative toposes:

\begin{theo}{sheavesorpredtoposes}
(Strong) predicative toposes are closed under internal sheaves.
\end{theo}
\begin{proof}
It is well-known that the category of internal presheaves has finite limits and is locally cartesian closed (see \cite{moerdijkpalmgren00}, for example). The same applies to internal sheaves, because finite limits and exponentials in sheaves are computed as in presheaves.

To show that the internal sheaves form a pretopos, we first observe that, in view of the previous theorem, we may assume that we are taking sheaves over an internal Grothendieck collection site. In that case, \cite[Lemma 8.1]{moerdijkpalmgren02} tells us that a sheafification functor, a left adjoint to the inclusion of presheaves in sheaves, exists (that argument also works for collection sites that are not strong). Hence sums and quotients in sheaves can be constructed by sheafifying sums and quotients in presheaves.

Furthermore, to show that the category of sheaves has W-types, we use AMC as in \cite[Theorem 4.21]{bergmoerdijk10b}. (It is clear from the proof that ordinary AMC suffices.)

Finally, that AMC is inherited by sheaves is shown in detail in \cite[Section 5]{bergmoerdijk10}; for strong AMC that was shown in \cite[Section 10]{moerdijkpalmgren02}. In fact, the latter proof can be simplified considerably by using the equivalence of SAMC with RP, but we will not go into the details here.
\end{proof}

\section{Conclusion and open questions}

As we said in the introduction, we hope that this paper can form a starting point for future work on predicative topos theory. We will finish it by indicating some directions for future research.

\begin{enumerate}
\item We have established that predicative toposes are closed under internal realizability and sheaves. But there are more closure properties one could have a look at, such as glueing, forming coalgebras for a cartesian comonad and taking filter quotients. Some results in this direction have been obtained for $\Pi W$-pretoposes (see \cite{moerdijkpalmgren00,berg09}). The question is whether these results still go through now that we have added AMC.
\item In view of the results in Section 5, it would be interesting to see whether algebraic theories have free algebras internally in a predicative topos. (This question was originally posed to us by Alex Simpson.) We expect that they do. More generally, it would be an interesting project to investigate which inductively defined structures and which initial algebras exist in a predicative topos.
\item One would like to have more examples of predicative toposes. Possible candidates are the ex/reg-completions of various quasitoposes, where one would hope to be able to further exploit \reftheo{exregcomplforpredtoposes}; in our opinion, this would be especially interesting for the modified assemblies (see \cite{vanoosten08}). One could also try to put the last couple of examples from Section 6 in the context of the theory of typed pcas (see \cite{longley99} and \cite{lietzstreicher02}), or in the framework of \cite{birkedal02}. In this way one should also be able to find new examples of predicative toposes that are not toposes.
\item Given the recent interest in homotopy type theory, it is natural to ask what is the connection to the notion of a predicative topos. For example, do the hSets (the types of h-level two) in homotopy type theory form a predicative topos? (This question is due to Bas Spitters.) 
\end{enumerate}

\bibliographystyle{plain} \bibliography{ast}

\end{document}